\titlespacing*{\section}{0pt}{14pt}{4pt}
\titlespacing*{\subsection}{0pt}{8pt}{3pt}
\patchcmd{\ttlh@hang}{\parindent\z@}{\parindent\z@\leavevmode}{}{}
\patchcmd{\ttlh@hang}{\noindent}{}{}{}
\def\maketimestamp{\count255=\time
\divide\count255 by 60\relax
\edef\thetime{\the\count255:}%
\multiply\count255 by-60\relax
\advance\count255 by\time
\edef\thetime{\thetime\ifnum\count255<10 0\fi\the\count255}
\edef\thedate{\number\day-\ifcase\month\or Jan\or Feb\or Mar\or
             Apr\or May\or Jun\or Jul\or Aug\or Sep\or Oct\or
             Nov\or Dec\fi-\number\year}
\def\timstamp{\hbox to\hsize{\tt\hfil\thedate\hfil\thetime\hfil}}}
\numberwithin{equation}{section}  
\newtheorem{theorem}{Theorem}[section]
\newtheorem{lemma}[theorem]{Lemma}
\newtheorem{proposition}[theorem]{Proposition}
\newtheorem{corollary}[theorem]{Corollary}
\theoremstyle{definition}
\newtheorem{example}[theorem]{Example}
\theoremstyle{remark}
\newtheorem{remark}[theorem]{Remark}
\DeclareMathOperator{\supp}{supp} %
\newcommandtwoopt{\mixedS}[2][\cG][\cH]{S_{{#1},{#2}}} 
\newcommand{\gaborA}[1][g]{\ensuremath{\set{M_{bm}T_{k} {#1}}_{m,k \in \Z}}} 
\newcommand{\gabor}[1][g]{\ensuremath{\set{M_{bm}T_{ak} {#1}}_{m,k \in \Z}}}
\newcommandtwoopt{\gaborG}[3][\Lambda][\Gamma]{\mathcal{G}(#3,#1,#2)} 
\newcommand{\charfct}[1]{\mathbf{1}_{#1}} 
\newcommand*{\numbersys}[1]{\ensuremath{\mathbb{#1}}}
\newcommand*{\C}{\numbersys{C}}
\newcommand*{\R}{\numbersys{R}}
\newcommand*{\Z}{\numbersys{Z}}
\newcommand*{\cH}{\mathcal{H}}
\newcommand*{\cG}{\mathcal{G}}
\newcommand{\itvoc}[2]{\ensuremath{\left({#1},{#2}\right]}} 
\newcommand{\itvoo}[2]{\ensuremath{\left({#1},{#2}\right)}} %
\newcommand{\itvcc}[2]{\ensuremath{\left[{#1},{#2}\right]}} %
\newcommand{\itvco}[2]{\ensuremath{\left[{#1},{#2}\right)}} %
\newcommand{\itvcos}[2]{\ensuremath{\lbrack{#1},{#2})}} %
\newcommand{\abs}[1]{\ensuremath{\left\lvert#1\right\rvert}}
\newcommand{\norm}[2][]{\ensuremath{\left\lVert#2\right\rVert_{#1}}}
\newcommand{\innerprod}[3][]{\ensuremath{\left\langle #2,#3\right\rangle_{\! #1}}}
\newcommand{\set}[1]{\ensuremath{\left\lbrace{#1}\right\rbrace}}
\newcommand{\setprop}[2]{\ensuremath{\left\lbrace{#1} : {#2}\right\rbrace}}
\newcommand{\kmax}{\ensuremath{k_{\mathrm{max}}}}
\newcommand*\oline[1]{%
  \vbox{%
    \hrule height 0.5pt
    \kern0.25ex
    \hbox{%
      \kern-0.1em
      \ifmmode#1\else\ensuremath{#1}\fi
      \kern-0.1em
    }
  }
}
\def\blfootnote{\xdef\@thefnmark{}\@footnotetext} 
\def\subjclass{\xdef\@thefnmark{}\@footnotetext}
\long\def\symbolfootnote[#1]#2{\begingroup%
\def\thefootnote{\fnsymbol{footnote}}\footnote[#1]{#2}\endgroup} 
  \renewenvironment{abstract}{%
      \titlepage
      \null\vfil
      \@beginparpenalty\@lowpenalty
      \begin{center}%
        \bfseries \abstractname
        \@endparpenalty\@M
      \end{center}}%
     {\par\vfil\null\endtitlepage}
  \renewenvironment{abstract}{%
      \if@twocolumn
        \section*{\abstractname}%
      \else
        \small
        \list{}{%
          \settowidth{\labelwidth}{\textbf{\abstractname:}}
          \setlength{\leftmargin}{50pt}
          \setlength{\rightmargin}{50pt}
          \setlength{\itemindent}{\labelwidth}
          \addtolength{\itemindent}{\labelsep}
        }
        \item[\textbf{\abstractname:}]

      \fi}
      {\if@twocolumn\else\endlist\fi}
\begin{document}

\title{Gabor windows supported on $[-1,1]$ and construction of compactly supported dual
  windows with optimal frequency localization}

\date{\today}

 \author{Jakob Lemvig\footnote{Technical University of Denmark, Department of Applied Mathematics and Computer Science, Matematiktorvet 303B, 2800 Kgs.\ Lyngby, Denmark, E-mail: \protect\url{jakle@dtu.dk}}\phantom{$\ast$},
Kamilla Haahr Nielsen\footnote{Technical University of Denmark, Department of Applied Mathematics
     and Computer Science, Matematiktorvet 303B, 2800 Kgs.\ Lyngby, Denmark, E-mail:
     \protect\url{kamni@dtu.dk}}} 

 \blfootnote{2010 {\it Mathematics Subject Classification.} Primary
   42C15. Secondary: 42A60}
 \blfootnote{{\it Key words and phrases.} dual frame, dual window,
   Gabor frame, optimal smoothness, redundancy, time-frequency localization}

\maketitle

\thispagestyle{plain}
\begin{abstract} 
  We consider Gabor frames $\set{e^{2\pi i bm \cdot} g(\cdot-ak)}_{m,k \in \mathbb{Z}}$ with translation parameter $a=L/2$, modulation parameter $b \in (0,2/L)$ and a window function $g \in C^n(\mathbb{R})$ supported on $[x_0,x_0+L]$ and non-zero on $(x_0,x_0+L)$ for $L>0$ and $x_0\in \mathbb{R}$. The set of all dual windows $h \in L^2(\mathbb{R})$ with sufficiently small support is parametrized by $1$-periodic measurable functions $z$. Each dual window $h$ is given explicitly in terms of the function $z$ in such a way that desirable properties (e.g., symmetry, boundedness and smoothness) of $h$ are directly linked to $z$. We derive easily verifiable conditions on the function $z$ that guarantee, in fact, characterize, compactly supported dual windows $h$ with the same smoothness, i.e., $h \in C^n(\mathbb{R})$. The construction of dual windows is valid for all values of the smoothness index $n \in \mathbb{Z}_{\ge 0} \cup \{\infty\}$ and for all values of the modulation parameter $b<2/L$; since $a=L/2$, this allows for arbitrarily small redundancy $(ab)^{-1}>1$. We show that the smoothness of $h$ is optimal, i.e., if $g \notin C^{n+1}(\mathbb{R})$ then, in general, a dual window $h$ in $C^{n+1}(\mathbb{R})$ does not exist.
\end{abstract}

\section{Introduction}
\label{sec:introduction}

One of the central tasks in signal processing and time-frequency analysis is
to find convenient series expansions of functions in $L^2(\R)$. A
popular choice of such series expansions is by use of Gabor
frames, which are function systems of the form 
\[  
   \gabor = \set{e^{2\pi i bm \cdot} g(\cdot-ak)}_{m,k\in\Z}, 
\]
where $a,b>0$ and $g \in L^2(\R)$, and where $T_{\lambda} f = f(\cdot-\lambda)$ and $M_{\gamma} f = e^{2\pi i
  \gamma\cdot} f$, $\lambda,\gamma \in \R$, denote the translation and modulation operator on
$L^2(\R)$, respectively. Now, a \emph{Gabor frame} for $L^{2}(\R)$ is a
Gabor system $\gabor$ for which there exists constants $A,B>0$ such that
\begin{equation}
\label{eq:gabor-frame-def} A \norm{f}^2 \le \sum_{m,k\in \Z}
\abs{\innerprod{f}{M_{bm}T_{ak}g}}^2 \le B \norm{f}^2 \quad \text{for all} \quad f\in L^{2}(\R).
\end{equation}
If the upper bound holds, we say that $\gabor$ is a \emph{Bessel system} with
bound $B$.
In case $ab<1$ and $\gabor$
satisfies \eqref{eq:gabor-frame-def}, there exists infinitely many functions $h\in L^2(\R)$
 such that $\gabor[h]$ is a Bessel system and
\begin{equation} \label{eq:gabor-rep-formula}
 f = \sum_{m,k\in \Z}  \innerprod{f}{M_{bm}T_{ak}g} M_{bm}T_{ak}h  \ \ \text{for all } f\in L^2(\R)
\end{equation}
holds with unconditionally $L^2$-convergence. The function $g$ generating
the Gabor frame $\gabor$ is called the \emph{window}, while $h$ is called a \emph{dual
window}. For a Bessel system $\gabor$, the linear operator $S_g : L^2(\R) \to
L^2(\R)$ defined by 
\[
S_gf=\sum_{m,k\in \Z}  \innerprod{f}{M_{bm}T_{ak}g}
M_{bm}T_{ak}g
\]
 is called the \emph{frame operator}, and the
\emph{canonical dual window} is given by $\tilde{h} = S_g^{-1}g$. 
We will consider  windows $g$ supported on an interval of length $L$.

\subsection{Painless non-orthonormal expansions}
\label{sec:painl-non-orth}

The most classical method of constructing dual windows is by \emph{painless
non-orthonormal expansions} by Daubechies et al. \cite[Theorem
2]{DaubechiesPainless1986}. Assume $s(x)$ is nonnegative, bounded and
supported on an interval of length $L>0$, and that $s$ has constant
periodization $\sum_{n \in \Z} s(x+an)=1$ almost everywhere. Then   
defining $g=s^p$ and $h=s^{1-p}$, where $0<p<1$ classically is taken to
be $p=1/2$, generate dual frames $\gabor$ and $\gabor[h]$ for any $0<b
\le 1/L$ and $0<a\le L$. 

A variant of the painless construction without assuming a partition of
unity property (i.e., constant periodization) is a follows.
We will again consider $g \in L^\infty(\R)$ having compact support in
an interval of length $L$. 
If $a \le L$, and $b \le 1/L$, the frame operator
$S_g$ becomes a multiplication operator: 
\[ 
S_gf(x) = \frac{1}{b} \sum_{n \in \Z} \abs{g(x + an)}^2 \cdot f(x).
\]    
It follows that the Gabor system $\gabor$ is a frame with bound $A$ and $B$ if and only
if  $A \le \frac{1}{b} \sum_{n \in \Z} \abs{g(\cdot + an)}^2 \le B$,
in which case the canonical dual window $\tilde{h}:=S_g^{-1}g$ is compactly supported on
$\supp g$ and given by $\tilde{h}(x)= b g(x)/\sum_{n \in \Z} \abs{g(x
  + an)}^2$. In this case $\gabor$ and $\gabor[\tilde{h}]$ are
canonical dual Gabor frames with compact support in an interval of
length $L$.

\subsection{Our contribution}
\label{sec:our-contribution}
On the borderline $a=L$ and $0<b< 1/L$ of the painless expansions region $(a,b)\in \itvoc{0}{L} \times
\itvoc{0}{1/L}$, the discontinuous window $g=L^{-1/2}\chi_{\itvcc{0}{L}}$ generates
a tight Gabor frame. In this case, the Gabor system becomes a union of
Fourier series $\bigcup_{k \in \Z}\set{M_{bm}T_{ak}g}_{m \in\Z}$ with no support
overlap between the different Fourier systems $\set{M_{bm}T_{ak}g}_{m
  \in\Z}$ indexed by $k\in\Z$. This 
may lead to unwanted mismatch artifacts at the seam points
$a\Z$ when truncating the Gabor expansion~\eqref{eq:gabor-rep-formula}. 

To diminish such artifacts, we will use a $2$-overlap (or $2$-covering) condition, namely, 
$a=L/2$. This means that for almost any time $x\in \R$, \emph{two}
Fourier-like systems $\set{M_{bm}T_{ak}g}_{m \in\Z}$, $k \in \Z$,
represent the signal $f$ at time $x$. Phrased differently, for each $k \in \Z$, the Fourier-like
system $\set{M_{bm}T_{ak}g}_{m \in \Z}$ has an overlap of length
$L/2$ with $\set{M_{bm}T_{a(k-1)}g}_{m \in \Z}$ and
with $\set{M_{bm}T_{a(k+1)}g}_{m \in \Z}$. 

Under the $2$-overlap condition $a=L/2$, the painless construction is
only applicable for redundancies $(ab)^{-1}\ge
2$ since $b \le 1/L$. We are interested in small
redundancies $(ab)^{-1}<2$, that is, large modulations $b > 1/L$; note
that, as is standard in Gabor analysis, in fact, necessary once
continuity of the window $g$
is imposed, we always take $(ab)^{-1}>1$. Hence, we are interested in the
redundancy range $(ab)^{-1} \in \itvoo{1}{2}$. However, once outside
the region of painless non-orthonormal expansions, computing the canonical dual $\tilde{h}=S_g^{-1}g$ becomes much more
cumbersome since it requires inverting the frame
operator $S_g$, and one often resort to numerical
approaches~\cite{JanssenIterative2007,PerraudinDesigning2018,Moreno-PicotEfficient2018,KloosImplementation2016,BalazsDouble2006}. 

For $n \in \Z_{\ge 0}\cup\{\infty\}$, we consider the window class of $n$ times continuously differentiable
functions $g:\R\to \C$, i.e., $g \in C^n(\R)$, that are supported on
a closed interval $\itvcc{x_0}{x_0+L}$ ($x_0 \in \R$) of length
$L$ and nonzero on the open interval $\itvoo{x_0}{x_0+L}$. For any function $g$ in this window class, the Gabor system $\set{M_{bm}T_{Lk/2}g}_{m \in \Z}$ is a frame for $L^2(\R)$
 if and only if $b \in \itvoo{0}{2/L}$
 \cite[Corollary~2.8]{ChristensenGabor2010}. 
The objective of our work is to
characterize and construct compactly
supported (alternate) dual windows $h$ of $g$ with good, even optimal,
frequency localization in a setting beyond the painless expansions region
with arbitrarily small redundancy, but without having to invert the frame operator.
 The
main features of our approach can be summarized as follows: 
\begin{enumerate}[(I)]
\item It uses 2-overlap ($a=L/2$) and works outside the region of
  painless non-orthonormal expansions and with arbitrarily small redundancy $(ab)^{-1}>1$ of the Gabor frames.
\item It provides a natural parametrization of all dual windows
  (via an explicit formula) with sufficiently small support, given in terms of a measurable function
  $z$ defined on a compact interval. 
\item  It provides optimal smoothness of the dual window $h$, e.g.,
  dual windows $h$ with the same smoothness as the
  original window, i.e., $g,h \in C^n(\R)$.  
\item  It yields support size of $h$ only depended on the modulation
  parameter $b \in \itvoo{0}{2/L}$, not on properties of $g$ (or $h$,
  e.g., smoothness). 
\end{enumerate}

Note that since we work beyond the painless expansions region (that is, $b>1/L$),
the canonical dual may not even have compact support. By a result of
B\"olcskei~\cite{BoelcskeiNecessary1999}, assuming rational redundancy
$(ab)^{-1}$, the canonical dual window has compact support if and only
if  the Zibulski-Zeevi representation of the frame
operator $S_g$ is unimodular in the frequency variable. 

\subsection{Results in the literature}
\label{sec:results-literature}

By dilation and translation of the Gabor system, 
we may without loss of generality take the translation parameter $a=1$, the
modulation
parameter $b \in \itvoo{0}{1}$, and $\supp g =\itvcc{-1}{1}$. 
Christensen, Kim and Kim \cite{ChristensenGabor2010} characterize the
frame property of $\gaborA$ for $g \in C^0(\R)$ with $\supp g =
\itvcc{-1}{1}$ and finitely many zeros in $\itvoo{-1}{1}$. In
particular, they inductively construct a continuous and compactly
supported dual window once such a $g$ generates a Gabor
frame $\gaborA$. While the focus in \cite{ChristensenGabor2010} is on
existence questions, we aim for explicit constructions for 
dual windows with symmetry and higher order smoothness, albeit for a smaller
class of window functions as we do not allow $g$ to have zeros inside
the support.  

Gabor systems with continuous windows supported on an interval of length $2$ have
 been considered in several recent
 papers~\cite{ChristensenGabor2015,Atindehouframe2018b} 
 typically the dual windows constructed in these works are not continuous.

There exist a considerable amount of work in Gabor analysis on
explicit constructions of alternate dual windows, see, e.g., \cite{ChristensenPairs2006,ChristensenPairs2006a,ChristensenGabor2010,ChristensenGabor2012,Christensendual2014,LaugesenGabor2009,KimGabor2015}. These constructions have the desirable feature that the dual window
shares or inherits many of the properties of the window, e.g.,
smoothness and compact support.
However, the methods from all the above cited works are restricted to the
painless expansions region, e.g., redundancies $(ab)^{-1} \ge 2$ or even
$(ab)^{-1} \ge 3$. 

The construction in \cite{LaugesenGabor2009} of  $C^n$-smooth dual windows 
supported on $\itvcc{-1}{1}$ is not guaranteed to work, however, when
it does both windows are spline polynomials. Our method always works,
but if $g$ is a piecewise polynomial, the dual window will in general
be a piecewise rational function of polynomials. The construction of
Laugesen~\cite{LaugesenGabor2009} is generally limited to the painless
region, but by using a trick of
non-linear dilation by a soft-thresholding type function,
 Laugesen is able to handle smaller redundancies, i.e.,
 $1<(ab)^{-1}<2$. However, this has
the effect of making the windows constant of most of their support,
which may not be desirable. 

In short, our work can be seen as a a continuation of
\cite{ChristensenGabor2010,ChristensenGabor2012}, but with
the objective of \cite{LaugesenGabor2009} to construct smooth dual
pairs of Gabor windows.

\subsection{Outline}
\label{sec:outline}

In Section~\ref{sec:construction} we introduce the family of dual
windows. Section~\ref{sec:prop-dual-wind} is the main contribution with
a detailed analysis of properties (smoothness, symmetry, etc.) of the dual windows. In
Section~\ref{sec:examples} we present examples of the construction.

\section{The construction of the dual windows} 
\label{sec:construction}

For each $n \in \Z_{\ge 0} \cup \set{\infty}$ we define the window classes:
\begin{equation}
  \label{eq:def-Vn_pos}
   V_+^n(\R)=\setprop{f \in C^n(\R)}{\supp{f} =\itvcc{-1}{1} \text{
       and } \abs{f(x)}>0 \text{ for all } x \in \itvoo{-1}{1}}
\end{equation}
Observe that the window classes are nested
$V_+^{n}(\R) \subset V_+^{n-1}(\R)$ for $n \in \Z_{>0}$, and that even for
$g$ in the largest of these window classes $V_+^0(\R)$, it is known
\cite[Corollary 2.8]{ChristensenGabor2010} that the Gabor system $\gaborA$ is a
frame for $L^2(\R)$ for any $b \in \itvoo{0}{1}$.



We now introduce compactly supported functions $h_z$ that will serve
as dual windows of $g \in V^0_+(\R)$. Assume
$0 < b<1$. Let $\kmax \in \Z_{\ge 0}$ be the largest integer
strictly smaller than $b/(1-b)$, that is,
\[ 
   \kmax = \max \setprop{k \in \Z_{\ge 0}}{k<\frac{b}{1-b}}.
\]
Note that $\kmax \ge 1$ when $1/2 < b <1$. For any
$k\in \set{0, 1,\dots, \kmax}$ we have $k(1/b-1)<1$. We
define, for $k\in \set{1,2,\dots, \kmax}$,
\[
[k]=\{1,2,\dots,k\},
\]
and set $[0]=\emptyset$.

For $g \in V_+^0(\R)$, define $\psi: \R \to \C$ by
\begin{equation}
  \label{eq:def-psi}
    \psi(x) = \frac{1}{\sum_{n \in \Z} g(x+n)} \qquad \text{for } x \in \R.
\end{equation}
By the assumptions on $g$, the $1$-periodic function $\psi$ is
well-defined, continuous, and satisfies
 \begin{equation}
   c \le \psi(x) \le C \qquad \text{for all } x \in \R
\label{eq:psi-bounds}
 \end{equation}
for some positive, finite constants $c,C>0$. We will often consider 
$\psi$ as a function on $\itvcc{0}{1}$ with $\psi(0)=\psi(1)=1/g(0)$ given by $\psi(x) =
1/(g(x)+g(x-1))$ for $x \in \itvcc{0}{1}$.

Let $z : \itvcc{0}{1}\to \C$ be a measurable function.  For each
$k \in \set{0,1,\dots, \kmax}$ we define the following auxiliary
functions:
\begin{align}
  \eta_k(x-k) = (-1)^{k} \prod_{j \in [k]}
  \frac{g(x+1+j(1/b-1))}{g(x+j(1/b-1))} \left[ 
-g(x+1) z(x+1) + b\psi(x+1) \right] 
\intertext{for $x \in \itvcc{-1}{-k(1/b-1)}$ and}
  \gamma_{k}(x+k) = (-1)^{k} \prod_{j \in [k]}
  \frac{g(x-1-j(1/b-1))}{g(x-j(1/b-1))} \left[ 
g(x-1) z(x) + b\psi(x) \right] \label{def_gamma}
\end{align}
for $x \in \itvcc{k(1/b-1)}{1}$. We
finally define $h_z:\R\to \C$ by
\begin{align}
\overline{h_z(x)} =  \sum_{k=0}^{\kmax} \eta_{k}(x)\, \charfct{\itvcc{-k-1}{-k/b}}(x) 
             +   \sum_{k=0}^{\kmax} \gamma_k(x)\,
  \charfct{\itvcc{k/b}{k+1}}(x) \quad \text{for } x \in \R\setminus\{0\}, 
  \label{eq:def-h}
 \end{align}
and $\overline{h_z(0)}=b\psi(0)$. More explicitly, $h_z$ is given as:
\begin{equation*}
\overline{h_z(x)} =
\begin{cases} 
                \eta_k(x) &   x \in \itvcc{-k-1}{-k/b}, k = 1,\dots, \kmax,  \\
                -g(x+1)z(x+1)+ b\psi(x+1) & x \in \itvco{-1}{0}, \\
                g(x-1)z(x)+ b\psi(x) & x \in \itvcc{0}{1}, \\
                \gamma_k(x) &   x \in \itvcc{k/b}{k+1}, k = 1,\dots,
                \kmax,  \\
                  0 & \text{otherwise}.
   \end{cases} 
 \end{equation*}

 We remark that the function $\gamma_k(\cdot+k)$ is indeed well-defined on
 $\itvcc{k(1/b-1)}{1}$ since the product
 $\prod_{j \in [k]} \frac{1}{g(\cdot-j(1/b-1))}$ is well-defined on
 $\itvoo{-1+k(1/b-1)}{1/b}$ and
 $\itvcc{k(1/b-1)}{1} \subset \itvoo{-1+k(1/b-1)}{1/b}$. Note also
 that the product $\prod_{j \in [k]} g(\cdot-1-j(1/b-1))$ has support
 on $\itvcc{k(1/b-1)}{1+1/b}$. A similar consideration shows that
 $\eta_k$ is well-defined on its domain.
 
The function $h_z$ defined in \eqref{eq:def-h} has compact support in $\itvcc{-\kmax-1}{\kmax+1}$. More
precisely,
\begin{equation}
  \supp{h_z} \subset \bigcup_{k=1}^{\kmax} \itvcc{-k-1}{-k/b} \cup
  \itvcc{-1}{1} \bigcup_{k=1}^{\kmax} \itvcc{k/b}{k+1}.
\label{eq:support-of-h}
\end{equation}
The function $h_z$ is piecewise defined with
$\cup_{k=0}^{\kmax} \set{\pm k/b,\pm(k+1)}$ being the \emph{seam
  points} of $h_z$. The seam point $x=0$ is special as it is the only
seam point, where two nonzero functions in the definition of $h_z$
meet. In all other seam points, i.e., $x\neq 0$, the function $h_z$ is zero on
one side of the seam points.

\begin{remark}
\label{rem:difference-betwe-N-kmax} 
For later use, we remark that Christensen, Kim, and Kim~\cite{ChristensenGabor2010} consider dual windows
$h \in C^0(\R)$ of $g \in V_+^0(\R)$ with compact support
$\supp h \subset \itvcc{-N}{N}$, where
$N := \max \setprop{n \in \Z_{>0} }{n \le b/(1/b)}+1$ and $b\ge 1/2$. When
$b/(1-b)\notin \Z_{>0}$, then $N=\kmax+1$, and the support
$\supp h \subset \itvcc{-N}{N}$ corresponds to the support of $h_z$ in
\eqref{eq:support-of-h}. On the other hand, if $b/(1-b)\in \Z_{>0}$, there
is a mismatch in the support relation as $N=\kmax+2$. However, in case
$b/(1-b) \in \Z_{>0}$ we have $(N-1)/b=N$, and the interval
$\itvcc{(N-1)/b}{N}$ collapses to a point $\set{N}$. As a consequence,
we can, in any case, use $N:=\kmax+1$ when applying results from
\cite{ChristensenGabor2010}.
\end{remark}

\section{Properties of the dual windows} 
\label{sec:prop-dual-wind}

We first prove that $h_z$ indeed is a dual window of $g$ as soon as
$\gaborA[h_z]$ is a Bessel system in Section~\ref{sec:duality}.  In
Section~\ref{sec:properties-h_z} we show that the chosen
parametrization $z \mapsto h_z$ has several desirable properties. In
Section~\ref{sec:high-order-smoothn} we show how to construct smooth
dual windows $h \in C^n(\R)$ for any $g \in V^n_+(\R)$ and
$n \in \Z_{\ge 0} \cup \{\infty\}$. In Section~\ref{sec:optimality} we
discuss optimal smoothness of dual windows and show that the results
in Section~\ref{sec:high-order-smoothn}, in general, are optimal.
In Section~\ref{sec:duals-with-small} we minimize the support length
of the dual window in the sense of \cite{ChristensenGabor2012} while
preserving the optimal smoothness.


However, we first introduce some notation used below.  
We use $f(x_0^+)$ to denote the one-sided limit from
the right $\lim_{x\searrow x_0} f(x)$ and similarly $f(x_0^-)$ to denote
the one-sided limit from the left $\lim_{x\nearrow x_0} f(x)$. For $n
\in \Z_{\ge 0}$, we let $D^n[f]$ and $f^{(n)}$ denote the $n$th derivative of a
function $f : \R \to \C$, with the convention $f(x)=D^0[f](x)=f^{(0)}(x)$. 
We will repeatedly use the (general) Leibniz rule for differentiation
of products:
\begin{equation}
 (fg)^{(n)}(x)=\sum _{\ell=0}^{n}{n \choose
   \ell}f^{(n-\ell)}(x)g^{(\ell)}(x),
\label{eq:leibniz}
\end{equation}
where ${n \choose \ell}=\frac{n!}{\ell!(n-\ell)!}$ is the binomial coefficient.

A function $f:\itvcc{c}{d}\to \C$ is said to piecewise $C^{n}$ if there exists a
finite subdivision $\set{x_0,\dots,x_k}$ of $\itvcc{c}{d}$, $x_0=c$,
$x_n=d$ such that $f$ is $C^n$ on $\itvcc{x_{i-1}}{x_{i}}$, the derivatives at $x_{i-1}$ understood
as right-handed and the derivatives at $x_{i}$ understood
as left-handed, for every $i \in \set{1,\dots,k}$. Hence, if
$f$ is piecewise $C^n$, the one-sided limits of $f^{(m)}$ exists
everywhere for all $m=0,\dots,n$, but the left and right limits may
differ in a finite number of points. The definition of piecewise $C^n$
functions can be extended to functions on $\R$, we will, however,
only need it for compactly supported functions, where the modification
is obvious.

\subsection{Duality}
\label{sec:duality}

The following duality condition for two Gabor systems by Ron and
Shen~\cite{RonFrames1995,RonWeyl1997} is central to our work; we use the
formulation due to Janssen\cite{Janssenduality1998}.
\begin{theorem}[\cite{RonFrames1995}]
Let $b>0$ and $g,h \in L^2(\R)$. Suppose $\gaborA$ and $\gaborA[h]$
are a Bessel sequences. Then $\gaborA$ and $\gaborA[h]$ are dual frames for
$L^2(\R)$, if and only if, for all $k \in \Z$,
  \begin{equation}
    \label{eq:char-eqns}
    \sum_{n\in\Z} g(x+k/b+n) \overline{h(x+n)} = \delta_{0,k}b \quad \text{for a.e. } x \in \R.
  \end{equation}
\end{theorem}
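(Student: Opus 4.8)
The plan is to reduce the statement to a computation of the mixed analysis–synthesis operator
\[
  S_{g,h}f \;=\; \sum_{m,k\in\Z}\innerprod{f}{M_{bm}T_{k}g}\,M_{bm}T_{k}h ,
\]
which is bounded on $L^2(\R)$ precisely because both $\gaborA$ and $\gaborA[h]$ are Bessel. With both systems Bessel, the assertion that they are dual frames is equivalent to $S_{g,h}=\mathrm{Id}$: if the reproducing formula \eqref{eq:gabor-rep-formula} holds then $S_{g,h}=\mathrm{Id}$ weakly, hence as operators, and conversely $S_{g,h}=\mathrm{Id}$ together with the two Bessel bounds forces both frame inequalities and the duality. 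So everything comes down to characterising $S_{g,h}=\mathrm{Id}$, and for this I would use the \emph{Walnut representation} of $S_{g,h}$.

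To obtain it, I would fix $f$ in the dense subspace of bounded, compactly supported functions, so that all the series below are locally finite. For each fixed $k$, the numbers $\innerprod{f}{M_{bm}T_{k}g}=\int_{\R}f(x)\,\overline{g(x-k)}\,e^{-2\pi i bm x}\,dx$ are, up to the factor $b$, the Fourier coefficients of the $1/b$-periodic function $x\mapsto\sum_{j\in\Z}f(x+j/b)\,\overline{g(x+j/b-k)}$. Summing the corresponding Fourier series and then over $k$ yields
\[
  S_{g,h}f(x)=\frac{1}{b}\sum_{j\in\Z}f(x+j/b)\,G_j(x),
  \qquad
  G_j(x):=\sum_{k\in\Z}h(x-k)\,\overline{g(x+j/b-k)},
\]
where each $G_j$ is $1$-periodic and, after substituting $n=-k$ and taking complex conjugates, $\overline{G_j(x)}=\sum_{n\in\Z}g(x+j/b+n)\,\overline{h(x+n)}$, i.e.\ exactly the left-hand side of \eqref{eq:char-eqns} with $k$ replaced by $j$.

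With the Walnut representation in hand, the equivalence is extracted by inspecting the series term by term. If \eqref{eq:char-eqns} holds, then $G_0\equiv b$ and $G_j\equiv 0$ for $j\neq0$, so $S_{g,h}f=f$ on the dense subspace, hence $S_{g,h}=\mathrm{Id}$. Conversely, if $S_{g,h}=\mathrm{Id}$, I would first test against $f$ supported in an interval of length $<1/b$, on which only the $j=0$ term of the Walnut series contributes, to get $G_0(x)=b$ for a.e.\ $x$; then, feeding in translates of such test functions, the remaining identity $\sum_{j\neq0}f(x+j/b)G_j(x)=0$ forces $G_j\equiv0$ for every $j\neq0$. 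Rewriting $G_j$ via the conjugation above then gives \eqref{eq:char-eqns}.

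The main obstacle is the point one is tempted to gloss over: justifying the interchange of the double $(m,k)$-summation and the passage to the Walnut series, since for a general Bessel pair that series need not converge absolutely. I would avoid pointwise manipulation of $S_{g,h}f$ and instead argue weakly — computing $\innerprod{S_{g,h}f_1}{f_2}$ for $f_1,f_2$ bounded with compact support, where Fubini applies without difficulty — and only afterwards extend from this dense set to all of $L^2(\R)$ using the Bessel bounds; alternatively one simply invokes the detailed proof in Janssen~\cite{Janssenduality1998}.
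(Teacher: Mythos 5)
The paper does not prove this theorem: it is imported verbatim from Ron--Shen, in Janssen's formulation, and used as a black box, so there is no in-paper argument to compare yours against. Your sketch is the standard Walnut-representation proof (essentially the one in Janssen's article and in Christensen's book), and it is correct in outline. The reduction of duality to $S_{g,h}=\mathrm{Id}$ under the two Bessel hypotheses is legitimate, provided you note that the lower frame bounds come from Cauchy--Schwarz applied to $\norm{f}^2=\innerprods{S_{g,h}f}{f}$ together with the Bessel bounds; your identification $\overline{G_j(x)}=\sum_{n\in\Z}g(x+j/b+n)\overline{h(x+n)}$ matches the left-hand side of \eqref{eq:char-eqns}; and the converse extraction of $G_0\equiv b$ and $G_j\equiv 0$ by testing against functions supported on intervals of length less than $1/b$ and their $j/b$-translates is the standard argument. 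You also correctly isolate the only delicate point, the interchange of the $(m,k)$-summation with integration, and the weak-form fix you propose is the right one: for $f_1,f_2$ bounded with compact support the double series $\sum_{m,k}\innerprods{f_1}{M_{bm}T_kg}\innerprods{M_{bm}T_kh}{f_2}$ converges absolutely by Cauchy--Schwarz and the Bessel bounds, the $m$-sum is evaluated by Parseval on the period $1/b$, and the $k$-sum/integral interchange is justified because the periodizations of $\abssmall{g}^2$ and $\abssmall{h}^2$ are locally integrable; density and boundedness of $S_{g,h}$ then extend the identity to all of $L^2(\R)$. The one cosmetic adjustment I would insist on is that, in the converse direction, you run the small-support test-function argument in the same weak form (pair $T_{j/b}\chi_I$ against $\chi_I$ for small intervals $I$) rather than through the pointwise Walnut series, since the latter is exactly what you chose not to justify pointwise; with that, the argument is complete.
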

Since the infinite sums in \eqref{eq:char-eqns} are $1$-periodic, it
suffices to verify \eqref{eq:char-eqns} on any interval of length one.    
Furthermore, for $\supp g \subset \itvcc{-1}{1}$, the duality conditions
\eqref{eq:char-eqns} become, for $k \neq 0$,
  \begin{align}
    \label{eq:char-eqns-k-not-0}
    g(x-k/b) \overline{h(x)} + g(x-k/b-1) \overline{h(x-1)} &= 0 \quad \text{for a.e. } x \in \itvcc{k/b}{k/b+1}
\intertext{and, for $k=0$,} 
    \label{eq:char-eqns-k-0}
    g(x) \overline{h(x)} + g(x-1) \overline{h(x-1)} &= b \quad \text{for a.e. } x \in \itvcc{0}{1}.
 \end{align}

The following theorem shows that $h_z$ is a convenient representation of
dual windows of $g$ and justifies our interest in $h_z$.

\begin{theorem}
  Let $b \in \itvoo{0}{1}$, let 
  $g \in V^0_+(\R)$, and let $z:\itvcc{0}{1}\to \C$ be a measurable
  function. Then $g$ and $h_z$ satisfy the characterizing equations \eqref{eq:char-eqns}.
Hence, if $\gaborA[h_z]$ is a Bessel sequence, e.g., if $h_z \in
L^\infty(\R)$, then $\gaborA$ and $\gaborA[h_z]$ are dual frames for
$L^2(\R)$. 
\end{theorem}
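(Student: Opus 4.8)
The plan is to verify the characterizing equations \eqref{eq:char-eqns} directly by substituting the explicit formula \eqref{eq:def-h} for $\overline{h_z}$, using the reduced form of the duality conditions that is available because $\supp g \subset \itvcc{-1}{1}$, namely \eqref{eq:char-eqns-k-not-0} for $k \neq 0$ and \eqref{eq:char-eqns-k-0} for $k=0$. Since both sides are $1$-periodic, it suffices to check each equation on one period; for $k=0$ I work on $\itvcc{0}{1}$ and for $k \neq 0$ on the interval $\itvcc{k/b}{k/b+1}$. The support statement \eqref{eq:support-of-h} tells us which pieces of $h_z$ can be nonzero on the relevant translated intervals, so for each $k$ only finitely many of the $\eta_k,\gamma_k$ terms enter.

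First I would do the case $k=0$. On $\itvcc{0}{1}$ we have $\overline{h_z(x)} = g(x-1)z(x) + b\psi(x)$ and $\overline{h_z(x-1)} = -g(x)z(x) + b\psi(x)$ (reading off the second branch of the piecewise formula, shifted, and using that $\psi$ is $1$-periodic so $\psi(x-1+1)=\psi(x)$ — note the argument of $z$ is $x$ in both after the shift). Plugging into \eqref{eq:char-eqns-k-0}, the $z$-terms cancel: $g(x)g(x-1)z(x) - g(x-1)g(x)z(x) = 0$, and the remaining terms give $b\psi(x)\big(g(x)+g(x-1)\big) = b$ by the definition \eqref{eq:def-psi} of $\psi$ (recall $\psi(x) = 1/(g(x)+g(x-1))$ on $\itvcc{0}{1}$). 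This settles $k=0$, and in particular also handles the seam point $x=0$ where $\overline{h_z(0)}=b\psi(0)$.

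Next, the cases $k \geq 1$ (and by a symmetric argument $k \leq -1$). Fix $k \in \set{1,\dots,\kmax}$ and examine \eqref{eq:char-eqns-k-not-0} on $\itvcc{k/b}{k/b+1}$. Here I would split according to whether $x$ lies in a region where the $\gamma$-pieces are active: substituting $\overline{h_z(x)} = \gamma_k(x)$ (when $x \in \itvcc{k/b}{k+1}$) and $\overline{h_z(x-1)} = \gamma_{k-1}(x-1)$ (when $x-1 \in \itvcc{(k-1)/b}{k}$), and using the recursive structure built into the products $\prod_{j\in[k]}$ in the definition of $\gamma_k$, the two terms in \eqref{eq:char-eqns-k-not-0} should telescope to $0$. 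Concretely, the ratio $\gamma_k(x)/\gamma_{k-1}(x-1)$ should, after accounting for the shifts in the arguments, equal $-g(x-k/b-1)/g(x-k/b)$, which is exactly what makes $g(x-k/b)\gamma_k(x) + g(x-k/b-1)\gamma_{k-1}(x-1) = 0$. One must also check the transitional subcases — e.g. when $x$ is large enough in $\itvcc{k/b}{k/b+1}$ that $\overline{h_z(x)}=0$ but $\overline{h_z(x-1)}$ is still given by a $\gamma$ or the central piece — and confirm that in each such subcase the surviving term is annihilated because the corresponding factor of $g$ vanishes (this is where the interval endpoints $k/b$ versus $k+1$, and the inequality $k(1/b-1)<1$ for $k\le\kmax$, do the bookkeeping). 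For $k \le -1$ one repeats the argument with the $\eta$-pieces, or simply invokes the symmetry between the two halves of the construction. Finally, once \eqref{eq:char-eqns} holds and $\gaborA[h_z]$ is Bessel (which follows if $h_z \in L^\infty(\R)$ since $h_z$ is compactly supported), Theorem~\ref{...} (the Ron–Shen duality theorem quoted above) immediately gives that $\gaborA$ and $\gaborA[h_z]$ are dual frames.

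I expect the main obstacle to be the careful case analysis at the boundaries of the support intervals: the pieces of $h_z$ are defined on closed intervals $\itvcc{k/b}{k+1}$ and $\itvcc{(k-1)/b}{k}$ whose relative positions depend on $b$, and when one shifts by $\pm 1$ and by $k/b$ these intervals overlap only partially, so \eqref{eq:char-eqns-k-not-0} has to be checked on several sub-intervals of $\itvcc{k/b}{k/b+1}$ with a different pair of branches of $h_z$ active on each. The algebra within each sub-interval is a routine telescoping of the products defining $\eta_k,\gamma_k$; the care lies in making sure the sub-intervals exhaust $\itvcc{k/b}{k/b+1}$ and that on each one either the $z$-terms cancel and the $\psi$-terms reproduce $\delta_{0,k}b$, or (for $k\neq 0$) the relevant $g$-factor vanishes.
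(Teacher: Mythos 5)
Your plan is correct and follows essentially the same route as the paper's proof: the $k=0$ case by cancellation of the $z$-terms and the definition of $\psi$, the cases $1\le \abs{k}\le \kmax$ by the telescoping identity $g(x-k/b)\gamma_k(x)=-g(x-k/b-1)\gamma_{k-1}(x-1)$ coming from peeling off the $j=k$ factor of the product (negative $k$ by symmetry), and the final appeal to the Ron--Shen/Janssen duality theorem. The only point your case list omits is $\abs{k}\ge \kmax+1$, where \eqref{eq:char-eqns} holds trivially because $g(\cdot-k/b)$ and $h_z$ have essentially disjoint supports (since $\kmax+1\ge b/(1-b)$ gives $k/b-1\ge \kmax+1$); this is a one-line addition, which the paper makes explicitly.
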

\begin{proof}
  First, we check that \eqref{eq:char-eqns} holds for $k=0$. Since 
  \begin{equation*}
    g(x)\overline{h_z(x)} =
	\begin{cases} 
		g(x)\left[-g(x+1)z(x+1)+ b \psi(x+1)\right] & a.e.\ x \in \itvco{-1}{0} \\
		g(x)\left[g(x-1)z(x)+ b \psi(x)\right] & a.e.\ x \in
                \itvco{0}{1} \\
        0 & \text{ otherwise,}
    \end{cases}
  \end{equation*}
 it follows that, for $a.e.\ x \in \itvco{0}{1}$,
  \begin{align*}
    \sum_{n\in\Z} g(x+n) \overline{h_z(x+n)} &= -g(x-1)g(x)z(x)+ g(x-1) b \psi(x)\\[-1.1em]
                & \qquad + g(x)g(x-1)z(x)+ g(x) b \psi(x)\\
                &= b \psi(x) \left(g(x-1)+ g(x)\right) = b,
  \end{align*}
  where the final equality follows from the definition of $\psi(x)$. This shows that \eqref{eq:char-eqns} holds for $k=0$. 

For $\abs{k} \ge \kmax +1$, the functions $g(\cdot-k/b)$ and $h_z$ have
disjoint support. This follows from \eqref{eq:support-of-h} and the
fact that $\kmax + 1 \le (\kmax+1)/b$. Consequently,
equation~\eqref{eq:char-eqns} holds for $\abs{k} \ge \kmax +1$.

To show that \eqref{eq:char-eqns} holds $k \in \set{1,\dots,\kmax}$,
we will verify \eqref{eq:char-eqns} on $\itvco{k}{k+1}$. We first
compute 
  \begin{equation*}
    g(x-k/b)\overline{h_z(x)} =
	\begin{cases} 
		g(x-k/b)\gamma_{k-1}(x) & a.e.\ x \in \itvco{k/b-1}{k}, \\
		g(x-k/b)\gamma_{k}(x) & a.e.\ x \in \itvco{k/b}{k+1}, \\
		0 & \text{otherwise.}
    \end{cases}
  \end{equation*}      
  For $a.e.\ x \in \itvco{k}{k/b}$, it is trivial that
  \begin{equation*}
    \sum_{n\in\Z}g(x-k/b+n)\overline{h_z(x+n)} = 0.
  \end{equation*}  
  On the other hand, for $a.e.\ x \in \itvco{k/b}{k+1}$,
  \begin{equation}\label{eqn:CharEqn-pos-k}
    \sum_{n\in\Z}g(x+\tfrac{k}{b}+n)\overline{h_z(x+n)} = g(x-\tfrac{k}{b})\gamma_{k}(x) + g(x-\tfrac{k}{b}-1)\gamma_{k-1}(x-1)
  \end{equation}
We focus on the first term of the right hand side, which, by definition,
is given as:
  \begin{equation*}
  g(x-\tfrac{k}{b})\gamma_{k}(x) = g(x-\tfrac{k}{b})(-1)^{k} \prod_{j \in [k]}
  \frac{g(x-k-1-j(\tfrac{1}{b}-1))}{g(x-k-j(\tfrac{1}{b}-1))} \left[ 
  g(x-k-1) z(x-k) + b \psi(x-k)\right].
  \end{equation*}
Since
  \begin{equation*}
  \prod_{j \in [k]} \frac{g(x-k-1-j(\tfrac{1}{b}-1))}{g(x-k-j(\tfrac{1}{b}-1))}
  =\frac{g(x-\tfrac{k}{b}-1)}{g(x-\tfrac{k}{b})}\prod_{j \in [k-1]} \frac{g(x-k-1-j(\tfrac{1}{b}-1))}{g(x-k-j(\tfrac{1}{b}-1))},
  \end{equation*}
we can rewrite $g(x-\tfrac{k}{b})\gamma_{k}(x)$  as follows: 
  \begin{align*}
  g(x-\tfrac{k}{b})\gamma_{k}(x) &= g(x-\tfrac{k}{b}-1)(-1)^{k} \prod_{j \in [k-1]}
  \frac{g(x-k-1-j(\tfrac{1}{b}-1))}{g(x-k-j(\tfrac{1}{b}-1))} \\ & \phantom{=
                                                 g(x-\tfrac{k}{b}-1)(-1)^{k}
                                                 \prod_{j \in
                                                 [k-1]}} \cdot \left[ 
  g(x-k-1) z(x-k) + b \psi(x-k)\right]\\
  &= -g(x-\tfrac{k}{b}-1)\gamma_{k-1}(x-1).
  \end{align*}
By inserting this back into \eqref{eqn:CharEqn-pos-k}, we obtain, for $a.e.\ x \in \itvco{k/b}{k+1}$,
\[
\sum_{n\in\Z}g(x+\tfrac{k}{b}+n)\overline{h_z(x+n)} =
-g(x-\tfrac{k}{b}-1)\gamma_{k-1}(x-1) +
g(x-\tfrac{k}{b}-1)\gamma_{k-1}(x-1)=0,
\] 
which verifies (\ref{eq:char-eqns}) for $k \in \set{1, \dots, \kmax}$.

The calculations for $k \in \set{-\kmax, \dots, -1}$ are similar to the
above calculations, hence we leave this case for the reader. 
\end{proof}

The next result,
Lemma~\ref{lem:para-all-duals}, shows that not only is $h_z$ a
convenient expression of dual windows of $g \in V_+^0(\R)$, it is in
fact a parametrization by $1$-periodic measurable functions $z$ of
\emph{all} dual windows with sufficiently small support. The structure
of the proof of Lemma~\ref{lem:para-all-duals} is somewhat similar to the
proof of Lemma~3.3 in \cite{ChristensenGabor2010}. As the two results
are quite different, we give the proof of
Lemma~\ref{lem:para-all-duals}.

\begin{lemma}[Parametrization of all compactly supported dual windows]
\label{lem:para-all-duals}
  Let $b \in \itvoo{0}{1}$ and $g \in V_+^0(\R)$. Suppose $h \in L^2(\R)$ has compact support in
  $\itvcc{-\kmax-1}{\kmax+1}$. If $\gaborA$ and $\gaborA[h]$ are dual
  frames, then $h=h_z$ for some measurable function $z:\itvcc{0}{1} \to
  \C$. 
\end{lemma}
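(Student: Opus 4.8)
The plan is to read the parametrizing function $z$ off the values of $h$ on $\itvoo{0}{1}$, and then check piece by piece that $h$ must agree with $h_z$. Throughout I use only that $g,h\in L^2(\R)$, that $\supp h\subset\itvcc{-\kmax-1}{\kmax+1}$, and that $\gaborA$ and $\gaborA[h]$ are dual frames; by the Ron--Shen criterion together with $\supp g\subset\itvcc{-1}{1}$, the latter is equivalent to the identities \eqref{eq:char-eqns-k-not-0} and \eqref{eq:char-eqns-k-0}, which drive the whole argument. Since $g(x-1)\neq0$ for every $x\in\itvoo{0}{1}$, I first define $z(x)\defined(\overline{h(x)}-b\psi(x))/g(x-1)$ on $\itvoo{0}{1}$ and extend it arbitrarily (say by $0$) to $\itvcc{0}{1}$; this $z$ is measurable because $h$ is measurable while $\psi$ and $1/g(\cdot-1)$ are continuous on $\itvoo{0}{1}$. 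By construction $\overline{h(x)}=g(x-1)z(x)+b\psi(x)=\overline{h_z(x)}$ for a.e.\ $x\in\itvoo{0}{1}$; substituting this into \eqref{eq:char-eqns-k-0} and using $\psi(x+1)=1/(g(x)+g(x+1))$ from \eqref{eq:def-psi} gives, after a short computation, $\overline{h(x)}=-g(x+1)z(x+1)+b\psi(x+1)=\overline{h_z(x)}$ for a.e.\ $x\in\itvoo{-1}{0}$. Hence $h=h_z$ a.e.\ on $\itvoo{-1}{1}$.

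Next I show $h$ vanishes a.e.\ on the complement in $\itvcc{-\kmax-1}{\kmax+1}$ of $\supp h_z$, i.e.\ on the ``gap'' intervals $\itvoo{k}{k/b}$ and $\itvoo{-k/b}{-k}$, $k=1,\dots,\kmax$ (cf.~\eqref{eq:support-of-h}), by a \emph{downward} induction on $k$ from $\kmax$ to $1$; I describe the positive side, the negative side being symmetric. Applying \eqref{eq:char-eqns-k-not-0} for the integer $k$ on the sub-interval $x\in\itvoo{k+1}{k/b+1}$ of its domain $\itvcc{k/b}{k/b+1}$ is legitimate, since $k<k/b<k+1<k/b+1$, all of which follow from the single estimate $k\le\kmax<b/(1-b)$. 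On that sub-interval $h(x)=0$ a.e.: for $k=\kmax$ because $x>\kmax+1$ lies outside $\supp h$, and for $k<\kmax$ because $\itvoo{k+1}{k/b+1}\subset\itvoo{k+1}{(k+1)/b}$, which was cleared in the previous induction step. Since $x-\tfrac{k}{b}-1$ then ranges in $\itvoo{-k(1/b-1)}{0}\subset\itvoo{-1}{0}$, where $g$ does not vanish, \eqref{eq:char-eqns-k-not-0} forces $\overline{h(x-1)}=0$, i.e.\ $h=0$ a.e.\ on $\itvoo{k}{k/b}$. Together with the support hypothesis this yields $\supp h\subset\supp h_z$ up to a null set.

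It remains to identify $h$ on the outer pieces $\itvoo{k/b}{k+1}$ and $\itvoo{-k-1}{-k/b}$; again I treat the positive side, now by an \emph{upward} induction on $k=1,\dots,\kmax$. Applying \eqref{eq:char-eqns-k-not-0} for $k$ on $x\in\itvoo{k/b}{k+1}\subset\itvcc{k/b}{k/b+1}$, the coefficient $g(x-k/b)$ is nonzero (its argument lies in $\itvoo{0}{1}$), so $\overline{h(x)}=-\tfrac{g(x-k/b-1)}{g(x-k/b)}\,\overline{h(x-1)}$, where $x-1$ ranges in $\itvoo{k/b-1}{k}\subset\itvoo{(k-1)/b}{k}$; on the latter set $h=h_z$ is already known --- from the first paragraph when $k=1$ (note that $\gamma_0$ is exactly the formula defining $h_z$ on $\itvcc{0}{1}$), and from the previous induction step when $k\ge2$. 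Substituting $\overline{h(x-1)}=\gamma_{k-1}(x-1)$ and invoking the identity $g(x-k/b)\gamma_k(x)=-g(x-k/b-1)\gamma_{k-1}(x-1)$ established in the proof of the preceding theorem yields $\overline{h(x)}=\gamma_k(x)=\overline{h_z(x)}$. Collecting the three stages, $h=h_z$ a.e.\ on $\itvoo{-1}{1}$, on every outer piece, on every gap, and outside $\itvcc{-\kmax-1}{\kmax+1}$; these sets exhaust $\R$ up to the finitely many seam points, so $h=h_z$ in $L^2(\R)$.

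The delicate point is the second paragraph: one must see the compact-support hypothesis as the initial condition of a recursion that propagates inward through the duality equations, and carefully track which sub-interval of the domain $\itvcc{k/b}{k/b+1}$ of \eqref{eq:char-eqns-k-not-0} is used and why the factor $g(\cdot-k/b-1)$ stays nonzero there --- all of which reduces to the ordering $k<k/b<k+1<k/b+1$ coming from $k\le\kmax<b/(1-b)$. Once the gaps have been cleared, the remaining two stages are essentially a rerun of the computations already carried out in the proof of the preceding theorem, read in the reverse direction.
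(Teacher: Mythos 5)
Your proof is correct, and its core is the same as the paper's: read $z$ off $h$ on $\itvoo{0}{1}$, use the $k=0$ relation to extend to $\itvoo{-1}{0}$, and then induct outward through the pieces $\itvcc{k/b}{k+1}$ via the two-term duality relations, reusing the recursion $g(x-k/b)\gamma_k(x)=-g(x-k/b-1)\gamma_{k-1}(x-1)$. The one genuine difference is your second stage: the paper obtains the support structure \eqref{eq:support-of-h} for an arbitrary compactly supported dual window by citing Lemma~3.2 of \cite{ChristensenGabor2010} (with the bookkeeping of Remark~\ref{rem:difference-betwe-N-kmax}), whereas you derive it directly by a downward induction from $k=\kmax$, using the compact-support hypothesis as the seed and the relations \eqref{eq:char-eqns-k-not-0} on the subintervals $\itvoo{k+1}{k/b+1}$ to clear the gaps $\itvoo{k}{k/b}$ and $\itvoo{-k/b}{-k}$. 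Your interval bookkeeping there ($k<k/b<k+1<k/b+1$, the argument of $g(\cdot-k/b-1)$ staying in $\itvoo{-1}{0}$) is sound, so this yields a self-contained proof at the cost of a slightly longer argument; the paper's route is shorter but leans on an external lemma.
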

\begin{proof}
  Let $h \in L^2(\R)$ be a dual window of $g$ with
 $\supp h \subset \itvcc{-\kmax-1}{\kmax+1}$. 
 Lemma 3.2 in \cite{ChristensenGabor2010} says, see Remark~\ref{rem:difference-betwe-N-kmax},  that 
\[ 
  \supp{h} \subset \bigcup_{k=1}^{\kmax} \itvcc{-k-1}{-k/b} \cup
  \itvcc{-1}{1} \bigcup_{k=1}^{\kmax} \itvcc{k/b}{k+1}.
\]
  Define a measurable function $z$ on $\itvcc{0}{1}$ by:
 \[ 
  z(x) =  \frac{\overline{h(x)}-b\psi(x)}{g(x-1)} \qquad a.e.\ x \in \itvoo{0}{1}.
\] 
 This definition gives immediately that $h(x)=h_z(x)$ for $a.e.\ x \in \itvcc{0}{1}$.

By \eqref{eq:char-eqns} with $k=0$, we have for $a.e.~x \in \itvoo{-1}{0}$
\begin{align*}
  \overline{h(x)}&=\frac{-g(x+1)\overline{h(x+1)}+b}{g(x)} 
\intertext{We continue, using that $h(x)=h_z(x)$ for a.e.\@ $x \in
                   \itvcc{0}{1}$,}
  \overline{h(x)}&=\frac{-g(x+1)\bigl[-g(x)z(x+1) + b \psi(x+1)\bigr]+b}{g(x)} \\
 & = -g(x+1)z(x+1) + b \psi(x+1) = \overline{h_z(x)} \qquad \text{for
   } x \in \itvoo{-1}{1}. 
\end{align*}
Thus, also $h(x)=h_z(x)$ for a.e.~$x \in \itvcc{-1}{1}$. 

We will complete the proof by induction, showing that 
\[ 
h(x)=h_z(x) \quad \text{for a.e. } x \in \bigcup_{k=1}^{k_0} \itvcc{-k-1}{-k/b} \cup
  \itvcc{-1}{1} \bigcup_{k=1}^{k_0} \itvcc{k/b}{k+1}.
\]
holds for all integers $k_0$ in $1\le k_0 \le \kmax$. The base case $k_0=0$ was
verified above so we only have to show the induction step $k_0-1 \to
k_0$. 

We first consider $x>0$. By induction hypothesis, we have
$\overline{h(x)}=\overline{h_z(x)}=\gamma_{k_0-1}(x)$ for $a.e.\ x \in
\itvcc{(k_0-1)/b}{k_0}$. 
We aim to prove $\overline{h(x)}=\gamma_{k_0}(x)$ for $a.e.\ x \in
\itvcc{k_0/b}{k_0+1}$, or rather $\overline{h(x+1)}=\gamma_{k_0}(x+1)$
for $a.e.\ x \in
\itvcc{k_0/b-1}{k_0}$. Since $\itvcc{k_0/b-1}{k_0} \subset \itvcc{k_0/b-1}{k_0/b}$, it
follows by \eqref{eq:char-eqns} for $k=-k_0$:
\begin{align*}
  \overline{h(x+1)}&=-\frac{g(x-k_0/b)}{g(x-k_0/b+1)}\cdot \overline{h(x)}
 \intertext{for $a.e.\ x \in \itvcc{k_0/b-1}{k_0}$. Since also $\itvcc{k_0/b-1}{k_0} \subset \itvcc{(k_0-1)/b}{k_0}$, using the induction
                     hypothesis yields further:}
  \overline{h(x+1)}&=
                     -\frac{g(x-k_0-k_0(1/b-1))}{g(x-k_0+1-k_0(1/b-1))} \cdot
                     \gamma_{k_0-1}(x) = \gamma_{k_0}(x+1)
                     = \overline{h_z(x+1)}
\end{align*}
for $a.e.\ x \in \itvcc{k_0/b-1}{k_0}$. The argument for $x<0$ is
similar, hence we omit it.   
\end{proof}

Obviously, there are choices of an unbounded function $z$ that
leads to $h_z$ not generating a Gabor Bessel sequence in $L^2(\R)$,
e.g., if $h_z
\notin L^2(\R)$, in which case $\gaborA$ and
$\gaborA[h_z]$ are not dual
  frames. This is not contradicting Lemma~\ref{lem:para-all-duals}. 

Lemma~\ref{lem:para-all-duals} should be compared to the well-known
parametrization of all dual windows $h$ by functions $\varphi$ generating Bessel
Gabor systems, see, e.g.,
\cite[Proposition~12.3.6]{Christensenintroduction2016}; the
parametrization formula~\eqref{eq:all-duals-li} is due to Li~\cite{Ligeneral1995} and reads:
\begin{equation}
   h = S_g^{-1} g + \varphi - \sum_{m,n \in \Z}
   \innerprod{S^{-1}g}{M_{bm}T_{ak}g}M_{bm}T_{ak}\varphi. \label{eq:all-duals-li}
 \end{equation}
The parametrization by Bessel generators $\varphi$ is not injective nor explicit as one needs to compute both $S_g^{-1}
g$ and an infinite series. On the other hand, the
mapping $z \mapsto h_z$ ($z$ measurable on $\itvcc{0}{1}$) is 
injective and its range contains all dual windows with compact support
on $\itvcc{-\kmax-1}{\kmax+1}$. Moreover, by Lemma~\ref{lem:boundedness} below, the mapping $L^\infty({\itvcc{0}{1}}) \ni z \mapsto h_z$ is a bijective and
explicit parametrization of all bounded dual windows with sufficiently
small support, i.e., $\supp{h}\subset \itvcc{-\kmax-1}{\kmax+1}$. Of course, the parametrization in
Lemma~\ref{lem:para-all-duals} only works for our setting, in
particular, only under the
2-overlap condition, i.e., $a=L/2$,
while formula~\eqref{eq:all-duals-li} works for all Gabor frames.   

\subsection{Basic properties: symmetry, boundedness, and continuity}
\label{sec:properties-h_z}

The next two lemmas show that the chosen parametrization $z \mapsto h_z$ is rather natural. Indeed, both symmetry and boundedness properties of $g$ and $z$
are transferred to $h_z$.  

\begin{lemma}[Boundedness]
\label{lem:boundedness}
    Let $b \in \itvoo{0}{1}$. Suppose $g \in V_+^0(\R)$. Then $h_z \in L^\infty([0,1])$ if and only
  if $z \in L^\infty(\R)$. 
\end{lemma}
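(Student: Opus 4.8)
The plan is to prove the two implications separately; in both directions everything reduces to two elementary properties of $g\in V_+^0(\R)$ — that $g$ is bounded on $\R$ (continuous with compact support) and that $\abs{g}$ is bounded below by a positive constant on every compact subinterval of $\itvoo{-1}{1}$ (continuous and non-vanishing there) — together with the two-sided bound $c\le\psi\le C$ from \eqref{eq:psi-bounds}.

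\emph{If $z\in L^\infty$, then $h_z\in L^\infty$.} I would run through the finitely many pieces in the definition of $h_z$. On $\itvco{-1}{0}$ and $\itvcc{0}{1}$ this is immediate: there $\overline{h_z}$ is a sum of $b\psi$ with a product of one shift of $g$ and one shift of $z$ (the latter with argument in $\itvcc{0}{1}$, hence bounded by hypothesis), and $\overline{h_z(0)}=b\psi(0)$ is bounded. For the pieces $\eta_k,\gamma_k$ with $1\le k\le\kmax$, all the ingredients — the numerator shifts of $g$, the factors $g(\cdot\mp 1)$ and $b\psi(\cdot)$, and the shift of $z$ (argument in $\itvcc{0}{1}$) inside the bracketed terms — are bounded, so the only thing to check is that the denominator shifts $g(\cdot\mp j(1/b-1))$, $j\in[k]$, stay away from zero. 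For $\gamma_k$, with $x\in\itvcc{k(1/b-1)}{1}$ and $j\in[k]$, the argument $x-j(1/b-1)$ ranges over $\itvcc{(k-j)(1/b-1)}{1-j(1/b-1)}$; since $(k-j)(1/b-1)\ge 0$ and $k(1/b-1)<1$ for $k\le\kmax$, this is a nondegenerate compact subinterval of $\itvoo{-1}{1}$, so $\abs{g}$ has a positive lower bound on it. The computation for $\eta_k$ is symmetric. A finite product of bounded functions with reciprocals of functions bounded below is bounded, hence $h_z\in L^\infty(\R)$.

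\emph{If $h_z\in L^\infty$, then $z\in L^\infty$.} The key point is that on $\itvcc{0}{1}$ the function $z$ has two expressions, coming from the two pieces of $h_z$ that meet at the exceptional seam $x=0$: from the $\itvcc{0}{1}$-piece, $z(x)=(\overline{h_z(x)}-b\psi(x))/g(x-1)$, and from the $\itvco{-1}{0}$-piece evaluated at $x-1$, $z(x)=(b\psi(x)-\overline{h_z(x-1)})/g(x)$, each holding for a.e.\ $x\in\itvcc{0}{1}$. The first formula is harmless on $\itvcc{1/2}{1}$, where $g(x-1)$ is evaluated on the compact subinterval $\itvcc{-1/2}{0}$ of $\itvoo{-1}{1}$ and hence bounded below; the second is harmless on $\itvcc{0}{1/2}$, where $g(x)$ is evaluated on $\itvcc{0}{1/2}$ and hence bounded below. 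Combining the two estimates with $\psi\le C$ and $\abs{h_z}\le\norm[\infty]{h_z}$ a.e.\ yields an a.e.\ bound for $\abs{z}$ on all of $\itvcc{0}{1}$, i.e.\ $z\in L^\infty$ (this direction uses the values of $h_z$ on $\itvco{-1}{0}$ as well).

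The main obstacle, in both directions, is the single bad seam $x=0$: there $g(\cdot-1)$ and $g(\cdot)$ vanish at the two opposite endpoints of $\itvcc{0}{1}$, so the naive formula $z=(\overline{h_z}-b\psi)/g(\cdot-1)$ alone does not control $z$ near $0$ even when $h_z$ is bounded; the remedy is to read $z$ off the other ($\itvco{-1}{0}$) piece near $0$, where the nonvanishing shift $g(\cdot)$ sits in the denominator. The companion bookkeeping point in the forward direction is checking that every denominator in the $\eta_k,\gamma_k$ products is evaluated strictly inside $\itvoo{-1}{1}$ — which is exactly where the inequality $k<b/(1-b)$ (i.e.\ $k\le\kmax$) enters.
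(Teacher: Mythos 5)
Your argument is correct and essentially the same as the paper's: for the forward direction you bound the denominators $g(\cdot-j(1/b-1))$ away from zero on a compact subinterval of $\itvoo{-1}{1}$ (the paper does this via $0\le x-j(1/b-1)\le 2-1/b<1$), and for the converse you recover $z$ near $x=1$ from the $\itvcc{0}{1}$-piece and near $x=0$ from the $\itvco{-1}{0}$-piece, exactly as the paper does with its $\itvcc{c}{1}$ and $\itvcc{0}{1-c}$ split. No substantive differences to report.
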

\begin{proof}
To show the ``only if''-implication, note that for $x \in \itvcc{0}{1}$, we have
$h_z(x)=g(x-1)z(x)+b\psi(x)$. Since $\abs{g(x)}$ is bounded (on $\R$) and positive on $\itvoo{-1}{1}$,
it follows that if $h_z$ is bounded on $\itvcc{0}{1}$, then so is $z$
on $\itvcc{c}{1}$ for any $c>0$. Using the boundedness of $h_z$ on
$\itvcc{-1}{0}$ leads to the conclusion that $z$ is bounded on
$\itvcc{0}{1-c}$ for any $c>0$.

For the converse assertion, let $k \in \set{1,\dots,\kmax}$. Once we
argue for the boundedness of $\gamma_k$ and $\eta_k$ on
$\itvcc{k/b}{k+1}$ and $\itvcc{-k-1}{-k/b}$, respectively, the
assertion is clear. We only consider $\gamma_k$ as the argument of
$\eta_k$ is similar. Now, to argue for boundedness of $\gamma_k$,
it suffices to show that the product
\[ 
\prod_{j \in [k]} \frac{1}{g(x-j(1/b-1))} \quad \text{for } x \in \itvcc{k(1/b-1)}{1}
\]
is bounded. For all $j \in [k]$ and all $x \in \itvcc{k(1/b-1)}{1}$ we have
\[ 
  0 = k\left(\frac{1}{b}-1\right) - k\left(\frac{1}{b}-1\right) \le x
  - j\left(\frac{1}{b}-1\right)\le 2 - \frac{1}{b} < 1.  
\]
Let $c$ denote the positive minimum of the continuous function $\abs{g}$ on the compact
interval $\itvcc{-2+1/b}{2-1/b}$. Then  
\[ 
 \sup_{x \in \itvcc{k(1/b-1)}{1}} \prod_{j \in [k]}
 \abs{\frac{1}{g(x-j(1/b-1))}} \le k/c,
\]
which completes the proof. 
\end{proof}

\begin{lemma}[Symmetry]
\label{lem:symmetry}
   Let $b \in \itvoo{0}{1}$. Suppose $g \in V_+^0(\R)$ is even. Then $h_z$ is even if and only if
  $z$ is antisymmetric around
  $x=1/2$, i.e., $z(x)=-z(1-x)$ for a.e. $x \in \itvcc{0}{1/2}$. 
\end{lemma}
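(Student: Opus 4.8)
The plan is to exploit the explicit piecewise formula for $h_z$ together with the evenness of $g$, which forces a symmetry on $\psi$ and on the index structure of the pieces. First I would record the consequences of $g$ being even: $g(-x)=g(x)$ for all $x$, hence $\psi(-x)=1/(g(-x)+g(-x-1))=1/(g(x)+g(x+1))$; viewing $\psi$ as a $1$-periodic function this gives $\psi(-x)=\psi(x+1)=\psi(x)$, so $\psi$ itself is even (and $1$-periodic). On the interval picture this says $\psi(1-x)=\psi(x)$ for $x\in\itvcc{0}{1}$. The claim is then essentially that $h_z(-x)=\overline{h_z(-x)}$ — note $h_z$ is real-valued precisely when $z$ is real, but the characterizing equations are stated for $\overline{h_z}$, so I will work throughout with $\overline{h_z}$ and compare $\overline{h_z(-x)}$ with $\overline{h_z(x)}$; "$h_z$ even" and "$\overline{h_z}$ even" are the same statement.

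Next I would treat the central block $\itvcc{-1}{1}$ separately, since that is where the two nonzero pieces meet and where $z$ lives. For $x\in\itvcc{0}{1}$ we have $\overline{h_z(x)}=g(x-1)z(x)+b\psi(x)$, while $\overline{h_z(-x)}=\overline{h_z}$ evaluated on $\itvco{-1}{0}$ at $-x$, namely $-g(-x+1)z(-x+1)+b\psi(-x+1)=-g(1-x)z(1-x)+b\psi(1-x)$. Using $g(1-x)=g(x-1)$ (evenness) and $\psi(1-x)=\psi(x)$, equality $\overline{h_z(-x)}=\overline{h_z(x)}$ for a.e.\ $x\in\itvcc{0}{1}$ reduces exactly to $g(x-1)z(x)+b\psi(x)=-g(x-1)z(1-x)+b\psi(x)$, i.e.\ to $g(x-1)\bigl(z(x)+z(1-x)\bigr)=0$. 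Since $g(x-1)\neq 0$ for $x\in\itvoo{0}{1}$, this holds for a.e.\ $x$ in the central block if and only if $z(x)=-z(1-x)$ for a.e.\ $x\in\itvcc{0}{1}$, which is the asserted antisymmetry. This already gives both implications on $\itvcc{-1}{1}$, and it is the only place the antisymmetry condition enters.

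It then remains to check that, once $z$ is antisymmetric (equivalently, once $\overline{h_z}$ is even on $\itvcc{-1}{1}$), the outer pieces are automatically symmetric, i.e.\ $\overline{h_z(-x)}=\overline{h_z(x)}$ on each $\itvcc{k/b}{k+1}$ paired with $\itvcc{-k-1}{-k/b}$. For this I would compare the definitions of $\gamma_k(\cdot+k)$ on $\itvcc{k(1/b-1)}{1}$ and $\eta_k(\cdot-k)$ on $\itvcc{-1}{-k(1/b-1)}$: substituting $x\mapsto -x$ in the $\gamma_k$-product and using $g(-u)=g(u)$ turns each factor $g(x-1-j(1/b-1))/g(x-j(1/b-1))$ into $g(-x+1+j(1/b-1))/g(-x+j(1/b-1))$, which is exactly the $\eta_k$-factor at the reflected argument, and likewise the bracket $g(x-1)z(x)+b\psi(x)$ becomes $g(-x+1)z(-x)+b\psi(-x)=g(x-1)z(-x)+b\psi(x)$; when $z(-x)$ is interpreted via $1$-periodicity as $z(1-x)=-z(x)$, one sees the $\eta_k$ bracket $-g(x+1)z(x+1)+b\psi(x+1)$ matches up under the reflection. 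So $\eta_k(-x)=\gamma_k(x)$ as functions, and the reflected characteristic functions match because $\charfct{\itvcc{-k-1}{-k/b}}(-x)=\charfct{\itvcc{k/b}{k+1}}(x)$. Summing over $k$ gives $\overline{h_z(-x)}=\overline{h_z(x)}$ everywhere, including the seam point $x=0$ where $\overline{h_z(0)}=b\psi(0)$ is manifestly fixed.

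The one genuinely delicate point is bookkeeping the argument shifts and the periodic-versus-interval interpretation of $z$: $z$ is defined on $\itvcc{0}{1}$, but the brackets in $\gamma_k$ and $\eta_k$ evaluate $z$ at $x$ and $x+1$ respectively, so one must be careful that "$z$ antisymmetric around $1/2$ on $[0,1]$" is precisely what is needed to make $z(x+1)$ (appearing in $\eta_k$) equal $-z(-x)=-z(1-x)$ after reflection. Once this matching is set up correctly on the central block, the outer blocks are a purely formal consequence of the product formulas, so no convergence or regularity issues arise; hence I expect the proof to be short, with the only real work being the careful reflection identity $\eta_k(-x)=\gamma_k(x)$.
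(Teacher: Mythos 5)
Your proposal is correct and follows essentially the same route as the paper, which simply asserts that evenness of $g$ (hence of $\psi$) plus the definition of $h_z$ yields the claim by direct verification; you have merely carried out the verification the paper leaves to the reader, including the reflection identity $\eta_k(-x)=\gamma_k(x)$ on the outer pieces and the central-block computation that forces $z(x)=-z(1-x)$.
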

\begin{proof}
Assume first $z(x)=-z(1-x)$ for a.e.\ $x \in \R$. If $g$ is even, then
so is $\psi$, and it follows by
  straightforward verification in the definition~\eqref{eq:def-h} that
  $h_z(x)=h_z(-x)$ holds for a.e.\ $x \in \R$.  

On the other hand, if $h_z$ is even, then using the definition of $h_z$ on
$\itvcc{-1}{1}$, it follows easily that $z$ is antisymmetric around
  $x=1/2$. 
\end{proof}

The last lemma of this subsection characterizes continuity of the dual windows $h_z$ in
terms of easy verifiable conditions on $z$.

\begin{lemma}[Continuity]
\label{lem:continuity}
 Let $b \in \itvoo{0}{1}$. Suppose $g \in V^0_+(\R)$. Then $h_z \in C^0(\R)$ if and only if  
$z:\itvcc{0}{1}\to \C$ is a continuous function satisfying
\begin{equation}
  z(0)=\frac{b\,\psi(0)}{g(0)}=\frac{b}{g(0)^2}
\label{eq:C0-cond-on-z-left}
\end{equation}
  and
\begin{equation}
z(1)=-\frac{b\,\psi(1)}{g(0)}=-\frac{b}{g(0)^2}.
\label{eq:C0-cond-on-z-right}
\end{equation}
\end{lemma}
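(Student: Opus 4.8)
The plan is to analyze continuity of $h_z$ seam point by seam point, using the piecewise description of $h_z$ given after \eqref{eq:def-h}. Since $h_z$ is a product and quotient of continuous functions on each piece, it is automatically continuous on the interior of each interval in its support decomposition \eqref{eq:support-of-h} as soon as $z$ is continuous on $\itvoo{0}{1}$; the only issue is matching one-sided limits at the seam points $\cup_{k=0}^{\kmax}\set{\pm k/b, \pm(k+1)}$, together with ensuring that $h_z$ vanishes from the appropriate side at each outer seam point so that it glues continuously to the zero function. I would organize the points into three groups: (i) the special seam $x=0$, where two nonzero pieces meet; (ii) the seams $x=\pm 1$; (iii) the remaining seams $x = \pm k/b$ and $x=\pm(k+1)$ for $k \ge 1$, where $h_z$ is zero on one side.

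For group (i), at $x=0$ the left piece is $-g(x+1)z(x+1)+b\psi(x+1)$ and the right piece is $g(x-1)z(x)+b\psi(x)$; their one-sided limits are $-g(1)z(1)+b\psi(1) = b\psi(1) = b/g(0)$ wait — more carefully, $g(1)=0$, so the left limit is $b\psi(0^-)$; since $\psi$ is continuous and $1$-periodic with $\psi(0)=1/g(0)$ (using $g(1)=0$, or rather $\psi(1)=1/(g(1)+g(0))=1/g(0)$), both one-sided limits equal $b\psi(0)=b/g(0)$ provided $z$ is continuous at $0$ and $1$ (note $g(-1)=g(1)=0$ kill the $z$ terms). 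So $x=0$ imposes no constraint beyond continuity of $z$. For group (ii), at $x=1$ the relevant pieces are the $\itvcc{0}{1}$-piece, with left limit $g(0)z(1)+b\psi(1) = g(0)z(1)+b/g(0)$, and on the other side either the zero function (if $1/b > 1$, i.e. always, so $\itvcc{1/b}{2}$ starts strictly right of $1$) — hence the right limit of $h_z$ at $x=1$ is $0$. So continuity at $x=1$ forces $g(0)z(1) + b\psi(1) = 0$, i.e. $z(1) = -b\psi(1)/g(0) = -b/g(0)^2$, which is \eqref{eq:C0-cond-on-z-right}. Symmetrically, at $x=-1$ the $\itvco{-1}{0}$-piece has right-limit-at-$-1$ equal to $-g(0)z(0)+b\psi(0)$, and the left side is zero, giving $z(0) = b\psi(0)/g(0) = b/g(0)^2$, which is \eqref{eq:C0-cond-on-z-left}.

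For group (iii), I would show these seams are automatically handled. At $x = k/b$ for $1 \le k \le \kmax$, the function $h_z$ equals $\gamma_{k-1}$ on the left (on $\itvcc{(k-1)/b}{k}$ extended — careful with which interval) and $\gamma_k$ on the right (on $\itvcc{k/b}{k+1}$); but in fact the key observation is that $\gamma_k(x+k)$ contains the factor $g(x-1)z(x)+b\psi(x)$ evaluated near $x = k(1/b-1) < 1$, no — I should instead exploit that at these interior seams $h_z$ is zero on one side, so continuity there reduces to checking that $\gamma_k$ (resp. $\eta_k$) vanishes at the endpoint of its interval adjacent to the zero region. At $x=k+1$ (right endpoint of $\itvcc{k/b}{k+1}$), one has $\gamma_k(x+k)$ with the bracket $g(x-1)z(x)+b\psi(x)$ at $x=1$; invoking \eqref{eq:C0-cond-on-z-right}, $g(0)z(1)+b\psi(1)=0$, so $\gamma_k(k+1)=0$, matching the zero function to the right. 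Similarly at $x=k/b$ (left endpoint of $\itvcc{k/b}{k+1}$), $\gamma_k$ carries a factor $g(x-1)$ evaluated at $x=k/b-k = k(1/b-1)$; since $k(1/b-1)\le \kmax(1/b-1)<1$ this need not vanish — so instead I match against $\gamma_{k-1}$ on the left: the identity $g(x-k/b)\gamma_k(x) = -g(x-k/b-1)\gamma_{k-1}(x-1)$ derived in the proof of the previous Theorem, combined with $g(k/b - k/b)=g(0)\ne 0$, shows $\gamma_k(k/b) = -\gamma_{k-1}(k/b-1)\cdot g(-1)/g(0)=0$ since $g(-1)=0$; wait, that gives $\gamma_k(k/b)=0$, but $\gamma_{k-1}(k/b-1)$ is evaluated on $\itvcc{(k-1)/b}{k}$, and $k/b-1$ lies in that interval, and $\gamma_{k-1}$ there is generally nonzero — so actually continuity at $k/b$ requires the left-limit $\gamma_{k-1}(k/b)$ (extending $\gamma_{k-1}$, which is defined on $\itvcc{(k-1)/b}{k}$, but $k/b > k$, so $\gamma_{k-1}$ is NOT defined past $k$; rather $h_z$ is zero on $\itvcc{k}{k/b}$). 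Hence the left-limit of $h_z$ at $k/b$ is $0$ (coming from the zero stretch $\itvcc{k}{k/b}$, which is nonempty since $k/b>k$), and the right-limit is $\gamma_k(k/b)$, so I need $\gamma_k(k/b)=0$; but $\gamma_k(x+k)$ at $x=k/b-k=k(1/b-1)$ has the product $\prod_{j\in[k]} g(x-1-j(1/b-1))$ in the numerator, and the $j=k$ term is $g(x-1-k(1/b-1)) = g(k(1/b-1)-1-k(1/b-1)) = g(-1) = 0$. So indeed $\gamma_k(k/b)=0$ automatically. An analogous computation using $g(1)=0$ handles $x=-k/b$ and the $\eta_k$ pieces.

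Assembling: $h_z$ is continuous on $\R$ if and only if $z$ is continuous on $\itvcc{0}{1}$ (continuity on the open interval gives continuity of each piece on interiors; the special seam $x=0$ then matches automatically; interior seams $x=\pm k/b$, $x=\pm(k+1)$ for $k\ge 1$ match automatically by the vanishing computations using $g(\pm1)=0$ and \eqref{eq:C0-cond-on-z-left}--\eqref{eq:C0-cond-on-z-right}) \textbf{and} \eqref{eq:C0-cond-on-z-left}--\eqref{eq:C0-cond-on-z-right} hold (forced by, and sufficient for, continuity at $x=\pm 1$). For the converse direction — $h_z \in C^0(\R)$ implies $z$ continuous — I would restrict to $\itvcc{0}{1}$ where $z(x) = (\,\overline{h_z(x)} - b\psi(x)\,)/g(x-1)$ with $g(x-1)\ne 0$ on $\itvoo{0}{1}$, giving continuity of $z$ on the open interval, and then read off $z(0),z(1)$ from the matching conditions at $\pm 1$, which also shows they take the asserted values, whence $z$ extends continuously to the closed interval. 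The main obstacle is bookkeeping: correctly identifying, at each seam, which interval of the decomposition lies on which side (in particular that $k/b > k+?$ vs $k$, so that a genuine zero-stretch $\itvcc{k}{k/b}$ separates consecutive nonzero pieces), and verifying the automatic vanishing of $\gamma_k,\eta_k$ at the correct endpoints using $g(\pm 1)=0$ and the boundary conditions on $z$. None of the individual steps is deep, but the case analysis must be exhaustive.
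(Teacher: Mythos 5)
Your proposal is correct and takes essentially the same approach as the paper: sufficiency by checking each seam point (matching at $x=0$, vanishing of the brackets at $\pm(k+1)$ via \eqref{eq:C0-cond-on-z-left}--\eqref{eq:C0-cond-on-z-right}, and the automatic factor $g(\mp 1)=0$ at $\pm k/b$), and necessity by solving for $z$ on $\itvcc{0}{1}$ and using $h_z(\pm 1)=0$. The only point where you are terser than the paper is continuity of $z$ at $x=0$ in the necessity direction, where the formula $z(x)=(\overline{h_z(x)}-b\psi(x))/g(x-1)$ degenerates (a $0/0$ form since $g(-1)=0$) and one must instead argue via $h_z$ restricted to $\itvco{-1}{0}$, as the paper notes in one line.
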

\begin{proof}
Suppose first that $h_z \in C^0(\R)$. Then since for $x \in \itvoc{0}{1}$, we have
$g(x-1)\neq 0$ and $h_z(x)=g(x-1)z(x)+b\psi(x)$ with $h_z ,g,\psi\in C^0(\R)$, it follows that $z$ is continuous on
$\itvoc{0}{1}$. Continuity of $z$ at $x=0$, i.e., existence of
$\lim_{x \searrow 0} z(x)$, follows by similar considerations of
$h_z\vert_{\itvco{-1}{0}}$. By continuity of $h_z$ and
\eqref{eq:support-of-h}, we have $h_z(\pm 1)=0$. Hence, 
\begin{align*}
  0 = h_z(-1) = -g(0)z(0) + b\psi(0) \quad \text{and} \quad  0 = h_z(1) = g(0)z(1) + b\psi(1)
\end{align*}
which shows the ``only if''-implication.

To show the other implication, we have to work a little harder. On the
open set 
\[
\R \setminus \left(\bigcup_{k=0}^{\kmax} \set{\pm k/b,\pm(k+1)} \right),
\]
the function $h_z$ is continuous since it is a sum and product of continuous functions on
this set. 

To show continuity at the seam points $\cup_{k=0}^{\kmax} \set{\pm k/b,\pm(k+1)}$, it
suffices to show
\[
  h_z(0^-) = h_z(0^+), \quad h_z(\pm k/b)=0, \: k\neq 0, \quad
  \text{and} \quad   h_z(\pm (k+1))=0. 
\] 
for $k \in \set{0,1,\dots,\kmax}$. Fix $k \in \set{0,1,\dots,
  \kmax}$.
We first focus on the seam points in $\itvoc{-\infty}{-1}$. For
$x=-k-1$ we immediately have
  \begin{align*}
    h_z(-k-1)&=(-1)^{k} \prod_{j \in [k]}
  \frac{g(j(1/b-1))}{g(-1+j(1/b-1))} \left[ 
-g(0) z(0) + b \psi(0) \right] = 0
  \end{align*}
as the expression in the square brackets is zero by our assumption on
$z(0)$ in \eqref{eq:C0-cond-on-z-left}. 

For $x=-k/b$, $k \neq 0$, we note that $h_z(-k/b)$, by definition, contains the product 
\[
\prod_{j \in [k]}
  \frac{g(-k/b+k+1+j(1/b-1))}{g(-k/b+k+j(1/b-1))} 
\]
as a factor. Further, since $k\in [k]$, this product has
$ \frac{g(-k/b+k+1+k(1/b-1))}{g(-k/b+k+k(1/b-1))}=\frac{g(1)}{g(0)}$
as one of its factors. Since $g(1)=0$ by the support and continuity
assumption $g \in V_+^0(\R)$, it follows that $h_z(-k/b)=0$ for
$k=1,\dots, \kmax$.

Now, we consider seam points in $\itvco{1}{\infty}$. For $x=k+1$ we
have by \eqref{eq:C0-cond-on-z-right} that
\[
h_z(k+1)= (-1)^{k} \prod_{j \in [k]}
\frac{g(-j(1/b-1))}{g(1-j(1/b-1))} \left[ g(0) z(1) + b \psi(1)
\right] = 0.
\]
To see $h_z(k/b)=0$ we note that the product defining $h_z(k/b)$
contains the factor $g(-1)/g(0)$ which is zero due to the assumption
$g \in V_+^0(\R)$.

Finally, we show continuity of $h_z(x)$ at $x=0$. However, this
follows readily by considering the two one-sided limits $x \nearrow 0$
and $x \searrow 0$ of $h_z(x)$:
\begin{align*}
  h_z(0^-)&=-g(1)z(1)+b\,\psi(1)=b \,\psi(1) = b/g(0)
\intertext{and}
  h_z(0^+)&=g(-1)z(0)+b\,\psi(0)=b\,\psi(0) = b/g(0),
\end{align*}
respectively.
%
\end{proof}

\subsection{Higher order smoothness}
\label{sec:high-order-smoothn}

The main result of this section, Theorem~\ref{thm:higher-order-smooth}, characterizes
$C^n$-smoothness of $h_z$ in terms of conditions on $z$.  
 As these conditions involves derivatives of $\psi$, more precisely,
 $\psi^{(m)}(0)$, the next lemma shows how to operate with this
 condition. 
\begin{lemma}
\label{lem:higher-diff-psi-at-zero}
   Let $n \in \Z_{>0}$, and let $g \in V^n_+(\R)$. Then $\psi := \frac{1}{\sum_{n \in \Z} g(\cdot+n)}  \in C^n(\R)$
   and 
   \begin{equation}
\psi^{(n)}(0) = \sum_{\mathbf{m}\in M} \frac{n!}{m_1!m_2!\cdots m_n!}  \frac{(-1)^{m_1+\dots+m_n}(m_1+\dots+m_n)!}{g(0)^{m_1+\dots+m_n+1}} \prod_{j=1}^{n} \left( \frac{g^{(j)}(0)}{j!} \right)^{m_j},
\label{eq:diff-of-psi-at-zero}
\end{equation}
where $M:=\{(m_1,\dots,m_n)\in(\Z_{\ge 0})^n |1 \cdot m_1+2 \cdot m_2+\dots+n \cdot m_n=n\}$.
\end{lemma}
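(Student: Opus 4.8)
The plan is to reduce the statement to a standard application of Faà di Bruno's formula for the $n$th derivative of a composition. First I would establish that $\psi \in C^n(\R)$: the $1$-periodic denominator $x \mapsto \sum_{m \in \Z} g(x+m)$ agrees near $x=0$ with $g(x) + g(x-1)$ by the support condition $\supp g = \itvcc{-1}{1}$, and more generally it is a locally finite sum of $C^n$ translates of $g$, hence $C^n$; by \eqref{eq:psi-bounds} it is bounded below by $c>0$, so $\psi = 1/(\text{that sum})$ is a composition of the $C^\infty$ map $t \mapsto 1/t$ (defined on $[c,\infty)$, say) with a $C^n$ function, and is therefore $C^n(\R)$. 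This also justifies differentiating under the evaluation at $0$ using the local representation $\psi(x) = 1/(g(x)+g(x-1))$ on a neighborhood of $0$, but since $g^{(j)}(-1)=0$ for all $j \le n$ (as $g \in C^n(\R)$ is supported on $\itvcc{-1}{1}$, all one-sided derivatives at the endpoint $-1$ vanish), the term $g(x-1)$ contributes nothing to any derivative at $x=0$; hence $\psi^{(j)}(0)$ equals the $j$th derivative at $0$ of $1/g(x)$, using $g(0)\neq 0$.

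Next I would apply Faà di Bruno. Write $\psi = f \circ u$ near $0$ with $u = g$ and $f(t) = 1/t$, so that $f^{(\ell)}(t) = (-1)^{\ell}\,\ell!\,t^{-\ell-1}$, and in particular $f^{(\ell)}(g(0)) = (-1)^{\ell}\,\ell!\,g(0)^{-\ell-1}$. Faà di Bruno's formula in the Bell-polynomial / multi-index form states
\[
  (f\circ u)^{(n)}(0) = \sum_{\mathbf m \in M} \frac{n!}{m_1!\,m_2!\cdots m_n!}\; f^{(m_1+\dots+m_n)}\!\bigl(u(0)\bigr)\,\prod_{j=1}^{n}\left(\frac{u^{(j)}(0)}{j!}\right)^{m_j},
\]
where $M=\{(m_1,\dots,m_n)\in(\Z_{\ge 0})^n : m_1+2m_2+\dots+nm_n=n\}$ as in the statement. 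Substituting $u=g$ and the closed form for $f^{(\ell)}(g(0))$ with $\ell = m_1+\dots+m_n$ yields exactly \eqref{eq:diff-of-psi-at-zero}. I would either cite Faà di Bruno directly or, if a self-contained argument is preferred, prove it by induction on $n$ using the product rule \eqref{eq:leibniz} and the identity $\psi'(x) = -\psi(x)^2 g'(x)$ (valid near $0$), tracking how the combinatorial coefficients evolve — but invoking the known formula is cleaner.

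The only genuinely delicate point is the interchange between "global" and "local" representations of $\psi$ near $0$ — i.e. making sure that using $\psi(x)=1/(g(x)+g(x-1))$ on a neighborhood of $0$ (rather than the full periodized sum) is legitimate and that the $g(x-1)$ term really drops out of all derivatives at $0$. This is handled by the remark above that $g^{(j)}(-1^{\pm})=0$ for $0\le j\le n$ because $g$ is $C^n$ with support $\itvcc{-1}{1}$; I expect this to be the main (though still routine) obstacle, after which the computation is a direct substitution into Faà di Bruno's formula. A small bookkeeping check is that the exponent $m_1+\dots+m_n+1$ on $g(0)$ and the sign $(-1)^{m_1+\dots+m_n}$ in \eqref{eq:diff-of-psi-at-zero} match $f^{(\ell)}$ with $\ell=m_1+\dots+m_n$, which they do.
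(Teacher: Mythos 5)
Your proposal is correct and follows essentially the same route as the paper: one application of Fa\`a di Bruno's formula with $f(t)=1/t$, combined with the vanishing of $g^{(\ell)}(\pm 1)$ for $\ell=0,\dots,n$ forced by $g\in C^n(\R)$ and $\supp g=\itvcc{-1}{1}$. The only (immaterial) difference is order of operations: the paper applies Fa\`a di Bruno to the full local denominator $g(\cdot)+g(\cdot-1)$ and then drops the terms $g^{(\ell)}(-1)$, whereas you first argue that the translated term does not affect derivatives at $0$ and then apply the formula to $1/g$ alone.
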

\begin{proof}
Since $g \in V_+^n(\R)$, the sum $\sum_{n \in \Z} g(\cdot+n)$ is
in $C^n(\R)$ and is bounded below by a positive constant. Hence, since
the mapping $x\mapsto 1/x$ is $C^\infty$ on $\itvoo{0}{\infty}$, we
have $\psi \in C^n(\R)$. 

As usual, we consider $\psi$ on $\itvcc{0}{1}$ where the function is given by
$x\mapsto 1/(g(x)+g(x-1))$. We will use the following version of Fa\`a di Bruno's formula: 
\begin{equation} 
\label{eq:FaaDiBruno}
\frac{d^n}{d x^n}f\left(h(x)\right)=
\sum_{\mathbf{m}\in M}
\frac{n!}{m_1!m_2!\cdots m_n!} 
f^{(m_1+\cdots+m_n)}\left(h(x)\right)
\prod_{j=1}^{n}\left(\frac{h^{(j)}(x)}{j!}\right)^{m_j}.
\end{equation}
Taking $f(x)=\frac{1}{x}$ and $h(x)=g(x)+g(x-1)$ in Fa\`a di Bruno's formula yields
\begin{align*}
\psi^{(n)}(0) &= \sum_{\mathbf{m}\in M} \frac{n!}{m_1!m_2!\cdots m_n!} \frac{(-1)^{m_1+\dots+m_n}(m_1+\dots+m_n)!}{(g(0)+g(-1))^{m_1+\dots+m_n+1}} \prod_{j=1}^{n} \left( \frac{g^{(j)}(0)+g^{(j)}(-1)}{j!} \right)^{m_j}\\
&= \sum_{\mathbf{m}\in M} \frac{n!}{m_1!m_2!\cdots m_n!} \frac{(-1)^{m_1+\dots+m_n}(m_1+\dots+m_n)!}{g(0)^{m_1+\dots+m_n+1}} \prod_{j=1}^{n} \left( \frac{g^{(j)}(0)}{j!} \right)^{m_j},
\end{align*}
using $d^m(x^{-1})/dx^m = (-1)^m m! x^{-(m+1)}$ for $m
\in \Z_{>0}$ in the first equality and $g^{(\ell)}(-1)=0$ for
$\ell=0,1,\dots,n$ in the second.
\end{proof}

\begin{example}
\label{ex:compute-psi-deri-at-0}
  We illustrate the computation of
  Lemma~\ref{lem:higher-diff-psi-at-zero} for $n=1,2$. For $n=1$,
  since $M=\set{1}$, formula (\ref{eq:diff-of-psi-at-zero}) simply
  yields
  \begin{equation*}
  \psi^{(1)}(0) = -  \frac{g^{(1)}(0)}{g(0)^{2}}.
\end{equation*} 
 For $n=2$, we have $M=\set{(2,0),(0,1)}$, whereby
       \begin{equation*}
\psi^{(2)}(0) = 2\frac{g^{(1)}(0)^2}{g(0)^{3}} - \frac{g^{(2)}(0)}{g(0)^{2}}.
\end{equation*}
\end{example}

\begin{theorem}
\label{thm:higher-order-smooth}
  Let $n \in \Z_{>0}\cup \set{\infty}$, and let $g \in V^n_+(\R)$. The following assertions
  are equivalent: 
  \begin{enumerate}[(i)]
  \item $z\in C^n(\itvcc{0}{1})$ satisfies
    \eqref{eq:C0-cond-on-z-left}, \eqref{eq:C0-cond-on-z-right}, and,
    for each $m=1,\dots, n,$
    \begin{align}
      \label{eq:Cn-cond-on-z-left}
      z^{(m)}(0) &= -\sum_{\ell=1}^m  \binom{m}{\ell}
                   \frac{g^{(\ell)}(0)}{g(0)}  z^{(m-\ell)}(0) + b\frac{\psi^{(m)}(0)}{g(0)} ,
                   \intertext{and}
                   z^{(m)}(1) &= -\sum_{\ell=1}^m  \binom{m}{\ell}
                                \frac{g^{(\ell)}(0)}{g(0)}  z^{(m-\ell)}(1) - b\frac{\psi^{(m)}(0)}{g(0)}.
                                \label{eq:Cn-cond-on-z-right}
    \end{align}
\item $h_z \in C^n(\R)$.
  \end{enumerate}

\end{theorem}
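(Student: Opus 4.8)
The plan is to mirror the proof of Lemma~\ref{lem:continuity}, raising ``$C^0$'' to ``$C^n$''; throughout we work with $\overline{h_z}$ as given by~\eqref{eq:def-h}, noting $h_z\in C^n(\R)\iff\overline{h_z}\in C^n(\R)$. The first observation is that, as soon as $z\in C^n(\itvcc{0}{1})$, the function $\overline{h_z}$ is $C^n$ off the seam set $\bigcup_{k=0}^{\kmax}\set{\pm k/b,\pm(k+1)}$: there it is a finite sum of products and quotients of $C^n$ functions ($g\in C^n(\R)$; $\psi\in C^n(\R)$ by Lemma~\ref{lem:higher-diff-psi-at-zero}; the denominators $g(\cdot-j(1/b-1))$ bounded away from zero on the relevant intervals, exactly as in the well-definedness remarks after~\eqref{eq:def-h} and in the proof of Lemma~\ref{lem:boundedness}). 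Since a function that is one-sidedly $C^n$ on two abutting intervals with matching one-sided derivatives of all orders $\le n$ at the common endpoint is $C^n$ across it, everything then reduces to controlling the one-sided derivatives $\overline{h_z}^{(m)}$, $m=0,\dots,n$, at the finitely many seam points. Two facts will be used repeatedly: (a)~$\supp g=\itvcc{-1}{1}$ together with $g\in C^n(\R)$ gives $g^{(\ell)}(\pm1)=0$ for $\ell=0,\dots,n$; (b)~$\psi$ is $1$-periodic, so $\psi^{(m)}(1)=\psi^{(m)}(0)$. It is convenient to abbreviate the two bracket functions appearing in~\eqref{eq:def-h} as
\[
  \beta_-(s)=-g(s)z(s)+b\psi(s),\qquad \beta_+(s)=g(s-1)z(s)+b\psi(s)\qquad(s\in\itvcc{0}{1}),
\]
so that $\overline{h_z}$ equals a $C^n$ rational prefactor times $\beta_-(\cdot+k+1)$ on $\itvcc{-k-1}{-k/b}$ and times $\beta_+(\cdot-k)$ on $\itvcc{k/b}{k+1}$, the prefactor being $1$ when $k=0$.

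For (i)$\Rightarrow$(ii): at an inner seam $\pm k/b$ with $1\le k\le\kmax$, $\overline{h_z}$ vanishes identically on one side, while on the other side its prefactor contains a numerator factor of the form $x\mapsto g(x+c)$ whose argument equals $\pm1$ at the seam; by~(a) this factor --- hence, by the Leibniz rule~\eqref{eq:leibniz}, the whole product --- vanishes to order $n$ at the seam, so $\overline{h_z}$ glues $C^n$-ly (to $0$). At an outer seam $\pm(k+1)$, $\overline{h_z}$ again vanishes on one side and, on the other, equals a $C^n$ prefactor times $\beta_-$ at argument $0$ (resp.\ $\beta_+$ at argument $1$), so $C^n$-gluing holds precisely when $\beta_-^{(m)}(0)=0$ and $\beta_+^{(m)}(1)=0$ for all $m\le n$. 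Expanding with~\eqref{eq:leibniz} gives $\beta_-^{(m)}(0)=-\sum_{\ell=0}^{m}\binom{m}{\ell}g^{(\ell)}(0)z^{(m-\ell)}(0)+b\psi^{(m)}(0)$, and isolating the $\ell=0$ term $-g(0)z^{(m)}(0)$ shows that $\beta_-^{(m)}(0)=0$ is equivalent to~\eqref{eq:C0-cond-on-z-left} (for $m=0$) and to~\eqref{eq:Cn-cond-on-z-left} (for $1\le m\le n$); similarly, using~(b), $\beta_+^{(m)}(1)=0$ is equivalent to~\eqref{eq:C0-cond-on-z-right} and~\eqref{eq:Cn-cond-on-z-right}. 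All of these hold by hypothesis. Finally, at $x=0$ one computes using~(a) that $\overline{h_z}^{(m)}(0^-)=\beta_-^{(m)}(1)=b\psi^{(m)}(1)$ and $\overline{h_z}^{(m)}(0^+)=\beta_+^{(m)}(0)=b\psi^{(m)}(0)$, which agree by~(b) \emph{without} any further condition on $z$. Hence all seam-matchings hold and $h_z\in C^n(\R)$.

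For (ii)$\Rightarrow$(i): from $\overline{h_z}=\beta_+$ on $\itvcc{0}{1}$ we solve $z(x)=(\overline{h_z(x)}-b\psi(x))/g(x-1)$, which is $C^n$ on $\itvoc{0}{1}$ since $g(x-1)\neq0$ there, and from the piece on $\itvco{-1}{0}$ we get that $z(x+1)=(b\psi(x+1)-\overline{h_z(x)})/g(x+1)$ is $C^n$ on $\itvco{0}{1}$; together $z\in C^n(\itvcc{0}{1})$. Since $\overline{h_z}$ vanishes identically just to the right of $x=1$ and just to the left of $x=-1$, continuity of $\overline{h_z}^{(m)}$ forces $\beta_+^{(m)}(1)=0$ and $\beta_-^{(m)}(0)=0$ for $m=0,\dots,n$, which by the Leibniz computation above are precisely~\eqref{eq:C0-cond-on-z-left}--\eqref{eq:Cn-cond-on-z-right}. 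The case $n=\infty$ follows by applying the equivalence for every finite $n$ and intersecting.

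The only step I expect to require genuine care is the Leibniz bookkeeping that turns ``$\beta_\pm$ vanishes to order $n$ at an endpoint'' into the explicit recursions~\eqref{eq:Cn-cond-on-z-left}--\eqref{eq:Cn-cond-on-z-right} (and back); by contrast, the $C^n$-regularity of the rational prefactors, the automatic $C^n$-matching at $x=0$, and the order-$n$ vanishing at the inner seams are all immediate once~(a), (b) and the earlier lemmas are in hand.
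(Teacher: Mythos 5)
Your proposal is correct and follows essentially the same route as the paper's proof: away from the seam points $h_z$ is automatically $C^n$, the matching at $x=0$ and the order-$n$ vanishing at the interior seams $\pm k/b$ come for free from $g^{(\ell)}(\pm1)=0$ together with the Leibniz rule~\eqref{eq:leibniz}, and the conditions \eqref{eq:C0-cond-on-z-left}, \eqref{eq:C0-cond-on-z-right}, \eqref{eq:Cn-cond-on-z-left}, \eqref{eq:Cn-cond-on-z-right} are exactly the order-$n$ vanishing of the bracket factors at the outer seams $\pm(k+1)$. The only notable difference is that you spell out the (ii)$\Rightarrow$(i) direction (solving for $z$ on $\itvcc{0}{1}$ and using the vanishing of $h_z^{(m)}$ at $\pm1$), which the paper omits as being analogous to the argument of Lemma~\ref{lem:continuity}.
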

\begin{proof}
We first prove the assertion (i)$\Rightarrow$(ii). Consider the open set
\[
J:=\R \setminus \left(\bigcup_{k=0}^{\kmax} \set{\pm k/b,\pm(k+1)} \right).
\]
The function $h_z\vert_{J}$ is in $C^{n}(J)$ since it is a sum and product of $C^{n}(J)$ functions.

To prove $C^n$-smoothness at the seam points $\cup_{k=0}^{\kmax}
\set{\pm k/b,\pm(k+1)}$, we need to show that 
\[
  h^{(m)}_z(0^-) = h^{(m)}_z(0^+), \quad h^{(m)}_z(\pm k/b)=0, \: k\neq 0, \quad
  \text{and} \quad   h^{(m)}_z(\pm (k+1))=0
\] 
for $m=1,\dots,n$ and $k \in \set{0,1,\dots,\kmax}$.
The proof of this is split into the three cases:
\[x=0, \quad x\in\bigcup_{k=0}^{\kmax} \set{\pm k/b} \quad \text{and} \quad x\in\bigcup_{k=0}^{\kmax} \set{\pm(k+1)}.\]
However, we first note that $g \in V_+^n(\R)$ implies $g^{(m)}(\pm 1)=0$ for $m=0,\dots,n$,

\emph{$C^n$-smoothness at $x=0$.}
For $x \in \itvcc{0}{1}$ 
\begin{align*}
h_z^{(m)}(x) =\sum_{\ell=0}^{m} \binom{m}{\ell}
  g^{(\ell)}(x+1)z^{(m-\ell)}(x+1)+ b\psi^{(m)}(x).
\end{align*}
Since $g^{(m)}(1)=0$ for $m=0,\dots,n$, it is readily seen that 
\begin{align*}
h_z^{(m)}(0^-)&=\sum_{\ell=0}^{m} \binom{m}{\ell}
                g^{(\ell)}(1)z^{(m-\ell)}(1) + b\psi^{(m)}(0) \\ &= b\psi^{(m)}(0) 
\end{align*}
for all $m \le n$.

Similarly, by considering $x \in \itvco{-1}{0}$, we see that 
\begin{align*}
h_z^{(m)}(0^+)&=-\sum_{\ell=0}^{m} \binom{m}{\ell}
                g^{(\ell)}(-1)z^{(m-\ell)}(0) + b\psi^{(m)}(1) = b\psi^{(m)}(0),
\end{align*}
where the last equality follows from $g^{(m)}(-1)=0$ for
$m=0,\dots,n$ and periodicity of $\psi$.

In the two remaining cases, we will use the following easy consequence of the Leibniz rule~\eqref{eq:leibniz}. If $f,g \in C^n(\R)$ and
$g^{(m)}(x_0)=0$ for $m=0,1, \dots, n$, then $(fg)^{(m)}(x_0)=0$ for
$m=0,1, \dots, n$, where $fg=x \mapsto f(x)g(x)$.

\emph{$C^n$-smoothness at $x=\pm k/b$, $k=1,\dots,\kmax$.}
We first consider $x=k/b$, $k=1,\dots,\kmax$. By rearranging the terms
in the definition of the auxiliary function $\gamma_k(x+k)$  in
(\ref{def_gamma}), we obtain 
\begin{multline*}
  \gamma_{k}(x+k) =g(x-1-k(1/b-1)) \\ \cdot \frac{(-1)^{k}}{g(x-k(1/b-1)))}\prod_{j \in [k-1]}
  \frac{g(x-1-j(1/b-1))}{g(x-j(1/b-1))} \left[ 
g(x-1) z(x) + b\psi(x) \right],
\end{multline*}
for $x \in \itvcc{k(1/b-1)}{1}$. Thus, for $x \in \itvcc{k/b}{k+1}$,
we will consider $\gamma_{k}(x)$ a product of the functions
\[
g(x-1-k/b)
\]
and
\[
\frac{(-1)^{k}}{g(x-k/b)}\prod_{j \in [k-1]}
  \frac{g(x-k-1-j(1/b-1))}{g(x-k-j(1/b-1))} \left[ 
g(x-k-1) z(x-k) + b\psi(x-k) \right].
\]
Focusing on the derivatives of $g(x-1-k/b))$ at $x=k/b$, we get
\[
D^m\left[g(x-1-k/b)\right](k/b)=g^{(m)}(-1)=0
\]
for $m=0,1,\dots,n$. Hence, as a consequence of the Leibniz rule~\eqref{eq:leibniz},
\[
\gamma_k^{(m)}(k/b)=0, \quad \text{for $m=1,2,\dots,n$}.
\]
 The calculations for $x=-k/b$, $k=1,\dots,\kmax$ are similar to the ones above hence we leave these to the reader.

 \emph{$C^n$-smoothness at $x=\pm(k+1)$, $k=1,\dots,\kmax$.}  Again,
 we first consider $x=k+1$, $k=1,\dots,\kmax$. For
 $x \in \itvcc{k/b}{k+1}$, we consider $\gamma_k(x)$ as a product of
 the two functions
\[
\prod_{j \in [k]}
  \frac{g(x-k-1-j(1/b-1))}{g(x-k-j(1/b-1))}
\]
and
\[
g(x-k-1) z(x-k) + b\psi(x-k).
\]
 We focus on the second of the two and observe that, for each $m=1,2,\dots,n$,
\begin{multline*}
D^m\left[g(\cdot-k-1) z(\cdot-k) + b\psi(\cdot-k)\right](k+1) =
\sum_{\ell=0}^m  \binom{m}{\ell}g^{(\ell)}(0)  z^{(m-\ell)}(1) +
b\psi^{(m)}(0) \\
=  g(0) \left[ z^{(m)}(1) +\sum_{\ell=1}^m  \binom{m}{\ell}\frac{g^{(\ell)}(0)}{g(0)} z^{(m-\ell)}(1) +
  b\frac{\psi^{(m)}(0)}{g(0)} \right] 
= 0,
\end{multline*}
where the final equality follows from the assumption \eqref{eq:Cn-cond-on-z-right}.
Thus, as a consequence of the Leibniz rule~\eqref{eq:leibniz}, we arrive at
\[\gamma_k^{(m)}(k+1)=0,\]
for $m=1,2,\dots,n$. The calculations for $x=-k-1$, $k=1,\dots,\kmax$ are similar to the ones above, hence we leave these to the reader.

The proof of assertion (ii)$\Rightarrow$(i) is similar to the argument
in the proof of Lemma~\ref{lem:continuity}, hence we will omit the proof. 
\end{proof}

\begin{example}
\label{ex:comp-higher-order-cond}
We compute \eqref{eq:Cn-cond-on-z-left} and
  \eqref{eq:Cn-cond-on-z-right} for $n=2$ using
  Example~\ref{ex:compute-psi-deri-at-0}. For $m=1$, using that 
$z$ also has to satisfy (\ref{eq:C0-cond-on-z-left}) and
(\ref{eq:C0-cond-on-z-right}), we get 
\begin{align}
      \label{eq:z1-smooth}
      z^{(1)}(0) &= -z^{(1)}(1) = -2b\frac{g^{(1)}(0) }{g(0)^{3}}.
\intertext{Similarly, for $m=2$, using that $z$ satisfies
(\ref{eq:C0-cond-on-z-left}), (\ref{eq:C0-cond-on-z-right}),
 and (\ref{eq:z1-smooth}), we get}
      \label{eq:z2-smooth}
      z^{(2)}(0) &= -z^{(2)}(1) = 6 b \frac{g^{(1)}(0)^2}{g(0)^{4}} - 2b \frac{g^{(2)}(0)}{g(0)^{3}} .
\end{align}
\end{example}

It is easy to find a function $z:\itvcc{0}{1}\to \C$ satisfying the
conditions in (i) in Theorem~\ref{thm:higher-order-smooth}. E.g., if
$n < \infty$, a polynomial $z$ of degree $2n+2$ will always
do. Further, as the conditions on $z \in C^n(\R)$ only concern
the derivatives of $z(x)$ at the boundary points $x = 0$ and $x=1$, there
is an abundance of $C^n$ dual windows of each $g \in V^n_+(\R)$ for
any value of $b \in \itvoo{0}{1}$.  Hence, given a window
$g \in V^n_+(\R)$, we can easily construct dual windows in $C^n(\R)$
using Theorem~\ref{thm:higher-order-smooth}; example of such
constructions will be given in Section~\ref{sec:examples}.

We now exhibit a large class of window functions $g \in V_+^n(\R)$ containing,
e.g.,  all symmetric windows and all windows forming a
partition of unity, for
which the issue of computing $\psi^{(m)}(0)$ used in
Theorem~\ref{thm:higher-order-smooth} disappears. 
\begin{corollary}
\label{cor:higher-order-smooth-part-of-unity}
    Let $n \in \Z_{>0} \cup \set{\infty}$. Suppose $g \in V^n_+(\R)$
    satisfies $g^{(m)}(0)=0$ for $m =1,\dots,n$. Then the following assertions are
    equivalent:
    \begin{enumerate}[(i)] 
\item $z\in C^n(\itvcc{0}{1})$ satisfies
      \eqref{eq:C0-cond-on-z-left}, \eqref{eq:C0-cond-on-z-right},
      and, for each $m=1,\dots, n,$
      \begin{align}
        \label{eq:Cn-cond-on-z-left-pou}
        z^{(m)}(0) &= 0, 
                     \intertext{and}
                     z^{(m)}(1) &=  0, 
                                  \label{eq:Cn-cond-on-z-right-pou}
      \end{align}
    \item $h_z \in C^n(\R)$.
    \end{enumerate}
In particular, if \begin{equation}
    z(x) = \frac{b}{g(0)^3} \bigl[ 2 g(x)-g(0)\bigr],\label{eq:z-standard}
  \end{equation}
then $h_z \in C^n(\R)$.
\end{corollary}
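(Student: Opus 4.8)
The plan is to deduce Corollary~\ref{cor:higher-order-smooth-part-of-unity} directly from Theorem~\ref{thm:higher-order-smooth} by specializing to windows $g$ with $g^{(m)}(0)=0$ for $m=1,\dots,n$. The first observation is that under this hypothesis the formula \eqref{eq:diff-of-psi-at-zero} in Lemma~\ref{lem:higher-diff-psi-at-zero} collapses dramatically: every term in the sum over $M$ contains a factor $\prod_{j=1}^n (g^{(j)}(0)/j!)^{m_j}$, and since $1\cdot m_1+2\cdot m_2+\dots+n\cdot m_n=n\ge 1$ forces at least one $m_j$ with $j\ge 1$ to be positive, each such term vanishes because $g^{(j)}(0)=0$. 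Hence $\psi^{(m)}(0)=0$ for all $m=1,\dots,n$. (Equivalently: $\psi$ is $C^n$ and, near $0$, equals $1/(g(x)+g(x-1))$ whose numerator-type expansion has vanishing derivatives through order $n$ at $0$.)

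Next I would substitute $\psi^{(m)}(0)=0$ and $g^{(\ell)}(0)=0$ (for $1\le\ell\le m\le n$) into the recursions \eqref{eq:Cn-cond-on-z-left} and \eqref{eq:Cn-cond-on-z-right} of Theorem~\ref{thm:higher-order-smooth}. On the right-hand side, the sum $\sum_{\ell=1}^m\binom{m}{\ell}\frac{g^{(\ell)}(0)}{g(0)}z^{(m-\ell)}(0)$ is empty in effect since every $g^{(\ell)}(0)=0$ for $\ell\ge 1$, and the term $b\psi^{(m)}(0)/g(0)$ also vanishes; so \eqref{eq:Cn-cond-on-z-left} becomes simply $z^{(m)}(0)=0$, which is \eqref{eq:Cn-cond-on-z-left-pou}, and analogously \eqref{eq:Cn-cond-on-z-right} becomes $z^{(m)}(1)=0$, which is \eqref{eq:Cn-cond-on-z-right-pou}. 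Conversely, if $z$ satisfies \eqref{eq:C0-cond-on-z-left}, \eqref{eq:C0-cond-on-z-right} together with $z^{(m)}(0)=z^{(m)}(1)=0$ for $m=1,\dots,n$, then (again using $g^{(\ell)}(0)=0$ and $\psi^{(m)}(0)=0$) it satisfies the full conditions \eqref{eq:Cn-cond-on-z-left}--\eqref{eq:Cn-cond-on-z-right} of Theorem~\ref{thm:higher-order-smooth}(i). The equivalence (i)$\Leftrightarrow$(ii) of the corollary is then immediate from the theorem. I would be careful to note that \eqref{eq:C0-cond-on-z-left} and \eqref{eq:C0-cond-on-z-right} are retained verbatim (they involve only $g(0)$, not its derivatives), so no simplification happens at order $m=0$.

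For the ``in particular'' claim, I would take the explicit candidate $z(x)=\frac{b}{g(0)^3}\bigl[2g(x)-g(0)\bigr]$ and verify it meets condition (i) of the corollary. At the endpoints, $z(0)=\frac{b}{g(0)^3}[2g(0)-g(0)]=\frac{b}{g(0)^2}$, matching \eqref{eq:C0-cond-on-z-left}; and $z(1)=\frac{b}{g(0)^3}[2g(1)-g(0)]=-\frac{b}{g(0)^2}$ since $g(1)=0$, matching \eqref{eq:C0-cond-on-z-right}. For the higher derivatives, $z^{(m)}(x)=\frac{2b}{g(0)^3}g^{(m)}(x)$ for $m\ge 1$, so $z^{(m)}(0)=\frac{2b}{g(0)^3}g^{(m)}(0)=0$ by hypothesis, and $z^{(m)}(1)=\frac{2b}{g(0)^3}g^{(m)}(1)=0$ since $g\in V^n_+(\R)$ forces $g^{(m)}(\pm1)=0$ for $m=0,\dots,n$. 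Thus $z$ satisfies (i), so $h_z\in C^n(\R)$ by the equivalence just established. There is no real obstacle here; the only point requiring a little care is the vanishing of $\psi^{(m)}(0)$, which I would isolate as the one genuine computation (via Lemma~\ref{lem:higher-diff-psi-at-zero} and the degree constraint defining $M$), after which everything else is bookkeeping inside the recursions of Theorem~\ref{thm:higher-order-smooth}.
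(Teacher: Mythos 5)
Your proposal is correct and follows essentially the same route as the paper: use Lemma~\ref{lem:higher-diff-psi-at-zero} to get $\psi^{(m)}(0)=0$ for $m=1,\dots,n$, note that the conditions \eqref{eq:Cn-cond-on-z-left}--\eqref{eq:Cn-cond-on-z-right} of Theorem~\ref{thm:higher-order-smooth} then reduce to \eqref{eq:Cn-cond-on-z-left-pou}--\eqref{eq:Cn-cond-on-z-right-pou}, and verify directly that the choice \eqref{eq:z-standard} meets all $2n+2$ conditions. The only difference is that you spell out the vanishing of the terms in \eqref{eq:diff-of-psi-at-zero} and the endpoint computations, which the paper leaves as immediate.
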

\begin{proof}
 By Lemma~\ref{lem:higher-diff-psi-at-zero}, it follows that $\psi^{(m)}(0)=0$
for  $m = 1,\dots,n$. With $g^{(m)}(0)=0$
and $\psi^{(m)}(0)=0$ for all $m =1,\dots,n$ conditions~\eqref{eq:Cn-cond-on-z-left} and
\eqref{eq:Cn-cond-on-z-right} reduce to
\eqref{eq:Cn-cond-on-z-left-pou} and
\eqref{eq:Cn-cond-on-z-right-pou}, respectively. 
Finally, it is straightforward to verify that
$z$ defined by \eqref{eq:z-standard} satisfies the $2n+2$ conditions in (i).
\end{proof}


\begin{remark}
\label{rem:cor-on-smoothness}
  \begin{enumerate}[(a)]
  \item Suppose $g\in V_+^n(\R)$ satisfies the assumptions of
    Corollary~\ref{cor:higher-order-smooth-part-of-unity}, i.e.,
    $g^{(m)}(0)=0$ for $m =1,\dots,n$. Now, if $h\in C^n(\R)$ and $b \in \C$ satisfy the window condition
  $\sum_{n\in \Z} g(x+n)\overline{h(x+n)}=b$ for $x \in \R$, then by
  term-wise differentiating the window condition we see that
  $h^{(m)}(0)=0$ for $m=1,\dots,n$. Thus, any dual 
  window in $C^n(\R)$, not necessarily with compact support, will also have this
  property. 
\item 
    Suppose $g \in V^n_+(\R)$ satisfies either
    $\sum_{n \in \Z} g(x+n)=1$ or $g(x)=g(-x)$ for $x \in \R$ (or
    both).  Then $g$ satisfies the assumptions of
    Corollary~\ref{cor:higher-order-smooth-part-of-unity}, i.e.,
    $g^{(m)}(0)=0$ for $m =1,\dots,n$. For $g$ symmetric, this is
    obvious.  If $g\in V^n_+(\R)$ forms a partition of unity, then, by
    differentiating $g(x)+g(x-1)=1$ for $x \in \itvcc{0}{1}$, one
    will see that $g^{(m)}(0)=0$ for $m=1,\dots,n$.
  \item The dual window $h_z$ defined by \eqref{eq:z-standard} in
    Corollary~\ref{cor:higher-order-smooth-part-of-unity} is often a
    convenient choice as it guarantees that the dual window $h_z$ is
    defined only in terms of the window $g$. Hence, if $g$ is, e.g., a
    piecewise polynomial, then $h_z$ becomes a piecewise rational
    function of polynomials. However, if $g$ is symmetric, $h_z$
    defined by \eqref{eq:z-standard} will only be symmetric, if
    $g(x)=g(0)-g(1-x)$ for $x \in \itvcc{0}{1}$, that is, if the graph
    of $g$, restricted to $\itvcc{0}{1}\times \C$, is anti-symmetric around $(1/2,g(0)/2)$.
  \end{enumerate}
\end{remark}
\subsection{Optimality of smoothness}
\label{sec:optimality}

 The first result of this section shows
that, even though there are an abundance of dual windows in $C^n(\R)$ for
$g \in V^n_+(\R) \setminus C^{n+1}(\R)$, additional smoothness, e.g., dual windows in $C^{n+1}(\R)$, is in
general not possible. 

\begin{proposition}
  \label{thm:Cn-is-optimal-comp-h}
  Let $b \in \itvoo{0}{1}$. Let $n \in \Z_{\ge 0}$, and let
  $g \in V^n_+(\R)$ with $\sum_{n \in \Z} g(x+n)=1$ for $x \in
  \R$.
  Assume $g$ is a real-valued, piecewise $C^{n+1}$-function for which
  $g^{(n+1)}$ has a simple discontinuity at $x = -1$, $x=0$, \emph{and/or}
  $x=1$.
  If $h\in L^2(\R)$ is compactly supported in $\itvcc{-\kmax-1}{\kmax+1}$, and $\gaborA$ and
  $\gaborA[h]$ are dual frames, then $h \notin C^{n+1}(\R)$.
\end{proposition}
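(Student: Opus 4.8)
The plan is to assume, for contradiction, that $h\in C^{n+1}(\R)$ and to show that $g^{(n+1)}$ is then forced to be continuous at $x=0$, contradicting the hypothesis; the cases of a simple discontinuity at $x=\pm 1$ are handled by the same method together with the partition‑of‑unity relation below.

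I would first record the consequences of $\sum_{k\in\Z}g(x+k)=1$ that do not involve $h$. Evaluating at $x=0$ gives $g(0)=1$; differentiating $g(x)+g(x-1)=1$ on $(0,1)$ repeatedly gives $g^{(m)}(0)=0$ for $1\le m\le n$ (cf.\ Remark~\ref{rem:cor-on-smoothness}) and, at top order,
\[
  g^{(n+1)}(x)+g^{(n+1)}(x-1)=0\qquad(x\in(0,1)),
\]
which persists as a one‑sided identity at $x=0$ and at $x=1$. Since $g$ and all its derivatives of order $\le n$ vanish outside $[-1,1]$, this shows that a simple discontinuity of $g^{(n+1)}$ at $x=-1$ (resp.\ at $x=1$) is the assertion $g^{(n+1)}(0^{+})\neq 0$ (resp.\ $g^{(n+1)}(0^{-})\neq 0$), while one at $x=0$ is the assertion $g^{(n+1)}(0^{+})\neq g^{(n+1)}(0^{-})$.

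Now I would bring in $h$. Being continuous with compact support, $\gaborA[h]$ is Bessel, so by the Ron--Shen theorem \cite{RonFrames1995} the $k=0$ characterizing equation \eqref{eq:char-eqns-k-0},
\[
  g(x)\overline{h(x)}+g(x-1)\overline{h(x-1)}=b\qquad(x\in[0,1]),
\]
holds, and by continuity of $g$ and $h$ it holds at every point; at $x=0$ it yields $\overline{h(0)}=b\neq 0$ (using $g(0)=1$, $g(-1)=0$). This non‑vanishing of $h$ at the origin is the one piece of information the proof really uses. Moreover, by Lemma~3.2 of \cite{ChristensenGabor2010} (see Remark~\ref{rem:difference-betwe-N-kmax}) the dual window $h$ vanishes identically on $(1,1/b)$ and on $(-1/b,-1)$, so from $h\in C^{n+1}(\R)$ we get $h^{(\ell)}(\pm 1)=0$ for $\ell=0,1,\dots,n+1$.

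The core of the argument is to differentiate the displayed characterizing equation exactly $n+1$ times via the Leibniz rule \eqref{eq:leibniz}. On a one‑sided neighbourhood of $x=0$ and of $x=1$ both $x\mapsto g(x)$ and $x\mapsto g(x-1)$ are $C^{n+1}$, and $h\in C^{n+1}(\R)$, so the differentiated identity is valid there; being the $(n+1)$st derivative of the constant $b$, it equals $0$. Letting $x\to 0^{+}$ and discarding every summand carrying a factor $h^{(\ell)}(-1)$ or $g^{(m)}(0)$ with $1\le m\le n$, and using $g(0)=1$ and $\overline{h(0)}=b$, only two summands survive:
\[
  \overline{h^{(n+1)}(0^{+})}+b\,g^{(n+1)}(0^{+})=0 .
\]
Letting instead $x\to 1^{-}$ and using $h^{(\ell)}(1)=0$ in the same way gives $\overline{h^{(n+1)}(0^{-})}+b\,g^{(n+1)}(0^{-})=0$. (Equivalently: among the terms of the $(n+1)$st derivative of the left‑hand side, only $g^{(n+1)}(x)\overline{h(x)}$ can have a jump at $x=0$, and that jump equals $b$ times the jump of $g^{(n+1)}$; it must vanish since the right‑hand side is constant.) Because $h\in C^{n+1}(\R)$, the derivative $h^{(n+1)}$ is continuous at $0$, so subtracting the two identities yields $g^{(n+1)}(0^{+})=g^{(n+1)}(0^{-})$, i.e.\ $g^{(n+1)}$ is continuous at $x=0$ — contradicting the hypothesis. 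The main obstacle I anticipate is purely the bookkeeping of the Leibniz expansion and making sure the differentiation of the characterizing equation is legitimate near the seam points (which is exactly why one needs $g^{(m)}(0)=0$ for $1\le m\le n$ from the partition of unity); conceptually there is nothing beyond the identity $\overline{h(0)}=b\neq 0$.
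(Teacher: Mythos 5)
Your two identities $\overline{h^{(n+1)}(0)}+b\,g^{(n+1)}(0^{+})=0$ and $\overline{h^{(n+1)}(0)}+b\,g^{(n+1)}(0^{-})=0$ are correctly derived (they are in effect the paper's Lemma~\ref{lem:general-obstruc-window-cond}/Theorem~\ref{prp:Cn-is-optimal-general-h-frame} specialized to this window class, obtained directly from the $k=0$ duality relation without passing through the parametrization $h=h_z$), and they do settle the case in which $g^{(n+1)}$ jumps at $x=0$. The genuine gap is the last sentence: all you obtain is $g^{(n+1)}(0^{+})=g^{(n+1)}(0^{-})$, i.e.\ continuity of $g^{(n+1)}$ at $0$, and by your own translation of the hypotheses this does \emph{not} contradict a simple discontinuity located only at $x=\pm1$: a jump at $-1$ means $g^{(n+1)}(0^{+})\neq0$, a jump at $+1$ means $g^{(n+1)}(0^{-})\neq0$, and the configuration $g^{(n+1)}(0^{+})=g^{(n+1)}(0^{-})\neq0$ (jumps at both endpoints, none at the origin) is fully consistent with your conclusion. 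This configuration is not vacuous: for the Hann window $g(x)=\cos^{2}(\pi x/2)\chi_{\itvcc{-1}{1}}(x)$ with $n=1$, which is a partition of unity in $V^{1}_{+}(\R)$ and one of the paper's main examples, $g''$ is continuous at $0$ while $g''(-1^{+})=g''(1^{-})=\pi^{2}/2$, so the equation you derive is simply true and yields no contradiction. Hence your opening claim that the $\pm1$ cases are ``handled by the same method together with the partition-of-unity relation'' does not go through; some genuinely new ingredient is needed for that case.

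For comparison, the paper defers the $x=0$ case to Theorem~\ref{prp:Cn-is-optimal-general-h-frame} (your argument covers that case in essentially the same way), but for discontinuities at $\pm1$ it writes $h=h_z$, computes $h_z^{(n+1)}(0^{\pm})$ in terms of $g^{(n+1)}(\mp1^{\pm})$ and the forced boundary values $z(0)=-z(1)=b/g(0)^{2}$, arrives at $g^{(n+1)}(1^{-})=g^{(n+1)}(-1^{+})$ --- which, via the partition of unity, is exactly your relation $g^{(n+1)}(0^{+})=g^{(n+1)}(0^{-})$ --- and then attempts to rule out this equality by a \emph{sign} argument: with $g>0$ on $\itvoo{-1}{1}$ and $g^{(m)}(\pm1)=0$ for $m\le n$, one-sided Taylor expansion constrains the signs of $g^{(n+1)}(-1^{+})$ and $g^{(n+1)}(1^{-})$. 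That sign step is the entire content of the $\pm1$-only case and is absent from your proposal; note also that it is delicate (the sign of $g^{(n+1)}(1^{-})$ depends on the parity of $n+1$, as the Hann example with $n=1$ shows), so whatever patch you supply for this case must engage with more than the seam condition at $x=0$ --- for instance with the seam conditions at $x=\pm k/b$ and $\pm(k+1)$, which your argument never uses.
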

\begin{proof}
  Assume towards a contradiction that $h=h_z \in
  C^{n+1}(\R)$. Recall that $g \in V^n_+(\R)$ implies 
  $g^{(m)}(\pm 1)=0$ for $m=0,\dots,n$. We only consider the case
  $g(x)>0$ for $x \in \itvoo{-1}{1}$ as the argument for $g(x)<0$ is similar. Since $g(x)>0$ for $x \in \itvoo{-1}{1}$, it follows that
  $g^{(n+1)}(-1^+) \ge 0$ and $g^{(n+1)}(1^-) \le 0$. Assume that $g^{(n+1)}$ is discontinuous at $x = -1$ and/or
  $x=1$, i.e., $g^{(n+1)}(-1^+) > 0$ and/or $g^{(n+1)}(1^-) < 0$. The case $x=0$ follows from
  Theorem~\ref{prp:Cn-is-optimal-general-h-frame} below. 
  
  As in the proof of Theorem~\ref{thm:higher-order-smooth}, we see that
\[
h_z^{(n+1)}(0^-) =  g^{n+1}(1^-)z(1) = -g^{n+1}(1^-)b/g(0)^2 
\]
and 
\[
h_z^{(n+1)}(0^+) =- g^{n+1}(-1^+)z(0) = -g^{n+1}(-1^+)b/g(0)^2 
\]
Since, by assumption, $h_z^{(n+1)}(0^-)=h_z^{(n+1)}(0^+)$, it follows that
$g^{n+1}(1^-)=g^{n+1}(-1^+)$, which is a contradiction. 
\end{proof}

\begin{example}
\label{ex:bspine-B2-no-C1-dual}
Let $g = \max(0,1-\abs{x})$ be the second cardinal B-spline with
uniform knots. For any $b \in \itvoo{0}{1}$, the Gabor system
$\gaborA$ is a frame for $L^2(\R)$. Since $g'(x)$ is discontinuous at
$x=-1$ (and at $x=0$ and $x=1$), it follows by
Proposition~\ref{thm:Cn-is-optimal-comp-h} that no dual
windows $h \in C^1(\R)\cap L^2(\R)$ with support in
$\itvcc{-\kmax-1}{\kmax+1}$ exists for any
value of $b$.
\end{example}

Let us comment on the assumptions of
Proposition~\ref{thm:Cn-is-optimal-comp-h}. The location of the
discontinuity of $g^{(n+1)}$ is important. In fact, if discontinuities
of $g^{(n+1)}$ avoid certain points, the conclusion
$h \notin C^{n+1}(\R)$ of Proposition~\ref{thm:Cn-is-optimal-comp-h}
may not hold. On the other hand, we assume the partition of unity property of $g$
only for convenience as to simplify the proof. Furthermore, as we see
by the next two results, positivity of $\abs{g(x)}$ on $\itvoo{-1}{1}$
and compact support of the dual window $h$ are also not essential for
obstructions results on the smoothness of dual windows. For $n=-1$, we ignore the
 requirement $g \in C^n(\R)$ in the formulation below.
\begin{lemma}
  \label{lem:general-obstruc-window-cond}
 Let $n \in \Z_{\ge -1}$, and let $g \in
 C^n(\R)$ be a piecewise $C^{n+1}$-function. Let
 $\{x_r\}_{r\in [R]}=\{x_1,\dots,x_R\}$ denote the finite set of points, where $g^{(n+1)}$
 has simple discontinuities. Assume $h
 \in C^{n+1}(\R)$ and the constants $a>0$, $b \in \C$ satisfy the window condition 
 \begin{equation}
\sum_{n \in \Z} g(x+an)\overline{h(x+an)} = b,  \quad \text{for
  all } x \in \itvcc{-a/2}{a/2}.
\label{eq:general-windows-central-cond}
\end{equation}
Let $\set{t_r}_{r \in [R]}=\{x_r\}_{r\in [R]} \ (\bmod \ a)$ so
that $t_r \in \itvco{-a/2}{a/2}$ and $t_r \le t_{r+1}$. Set $t_0=-a/2$ and
$t_{R+1}=a/2$. Suppose $\sum_{n \in \Z} D^m[g(\cdot+an)\overline{h(\cdot+an)}]$
converges uniformly on $\itvcc{t_r}{t_{r+1}}$ for $m = 1,\dots,n+1$
and $r=0,\dots, R$.
Then, for each $r=1,\dots,R$,
\begin{equation}
  \sum_{\{s \in [R]: x_s-x_r \in a\Z\} } \left[
    g^{(n+1)}(x_s^+)-g^{(n+1)}(x_s^-)\right] h(x_r) =
  0.
\label{eq:restric-on-h-at-discon}
\end{equation}
In particular, if $\{s \in [R]: x_s-x_r \in a\Z\} = \{r\}$, then $h(x_r) = 0$. 
\end{lemma}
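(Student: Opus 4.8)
The plan is to use that the left-hand side of \eqref{eq:general-windows-central-cond}, namely $G(x):=\sum_{n\in\Z}g(x+an)\overline{h(x+an)}$, is $a$-periodic, so the window condition forces $G\equiv b$ on all of $\R$; in particular $G$ is $C^{\infty}$ with $G^{(n+1)}\equiv 0$. The first substantive step is to differentiate $G$ termwise up to order $n+1$ on each closed piece $\itvcc{t_r}{t_{r+1}}$, $r=0,\dots,R$. Since the jump points of $g^{(n+1)}$ are exactly $\{x_s\}_{s\in[R]}$ and these reduce modulo $a$ to the partition points $\{t_r\}$, no point of the form $x_s-an$ lies in the open interval $\itvoo{t_r}{t_{r+1}}\subset\itvoo{-a/2}{a/2}$; hence every shifted window $x\mapsto g(x+an)$ is $C^{n+1}$ on $\itvoo{t_r}{t_{r+1}}$ with one-sided derivatives up to order $n+1$ existing at the endpoints, and so is each summand $\phi_n(x):=g(x+an)\overline{h(x+an)}$ (using $h\in C^{n+1}(\R)$). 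The uniform convergence of $\sum_n D^m[\phi_n]$ for $m=1,\dots,n+1$ on $\itvcc{t_r}{t_{r+1}}$, combined with the elementary theorem that uniform convergence of the differentiated series together with convergence at one point yields termwise differentiability, applied $n+1$ times in succession, gives $G^{(m)}=\sum_{n\in\Z}\phi_n^{(m)}$ on $\itvcc{t_r}{t_{r+1}}$ for $m=0,1,\dots,n+1$, including the one-sided derivatives at the endpoints $t_r$.

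Next I would compute the jump of $G^{(n+1)}$ across a partition point $t_r$, $r\in[R]$. Writing $\phi_n^{(n+1)}(x)=(g\overline{h})^{(n+1)}(x+an)$ and expanding $(g\overline{h})^{(n+1)}$ by the Leibniz rule \eqref{eq:leibniz}, every term $\binom{n+1}{\ell}g^{(n+1-\ell)}\overline{h}^{(\ell)}$ with $\ell\ge 1$ is continuous, because $g\in C^n(\R)$ makes $g^{(n+1-\ell)}$ continuous and $h\in C^{n+1}(\R)$ makes $\overline{h}^{(\ell)}$ continuous; thus the only discontinuities of $(g\overline{h})^{(n+1)}$ come from the term $g^{(n+1)}\overline{h}$ and are located at the $x_s$, with jump there equal to the jump of $g^{(n+1)}$ multiplied by the (continuous) factor $\overline{h}$ evaluated at that point. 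Consequently $\phi_n^{(n+1)}$ has a jump at $t_r$ precisely when $t_r+an$ equals some $x_s$; since $t_r\equiv x_r\pmod a$, this happens exactly for the $s$ with $x_s-x_r\in a\Z$, the integer $n=(x_s-x_r)/a$ being pinned down by $s$. Summing the jumps of all $\phi_n^{(n+1)}$ at $t_r$ and using that $G^{(n+1)}\equiv 0$ has no jump there yields \eqref{eq:restric-on-h-at-discon}.

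For the final assertion, if $\{s\in[R]:x_s-x_r\in a\Z\}=\{r\}$, then \eqref{eq:restric-on-h-at-discon} reduces to $[g^{(n+1)}(x_r^{+})-g^{(n+1)}(x_r^{-})]h(x_r)=0$, and since $g^{(n+1)}$ has a \emph{simple} discontinuity at $x_r$ the bracket is nonzero, forcing $h(x_r)=0$. The degenerate case $n=-1$ is the same argument with no differentiation at all: $G\equiv b$ has no jump and the jump of $\phi_n$ itself at $t_r$ is the jump of $g$ at the corresponding $x_s$ times the value of $\overline{h}$ there. The step I expect to fight with is the first paragraph: iterating the termwise-differentiation theorem $n+1$ times on the \emph{closed} pieces while keeping the one-sided derivatives at the $t_r$ under control, and checking carefully that the discontinuities of the shifted windows $g(\cdot+an)$ really land only at the endpoints of the pieces; once that is set up, the jump bookkeeping in the second paragraph and the extraction of \eqref{eq:restric-on-h-at-discon} are routine.
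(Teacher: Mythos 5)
Your argument is correct and follows essentially the same route as the paper's proof: differentiate the periodized window condition term by term up to order $n+1$ on each piece $\itvcc{t_r}{t_{r+1}}$ (justified by the uniform-convergence hypothesis, i.e., Rudin's Theorem~7.17), then compare the two one-sided limits of the differentiated series at $t_r$, noting via the Leibniz rule that only the $g^{(n+1)}\overline{h}$ terms can jump and that they do so exactly at the points $x_s$ with $x_s-x_r\in a\Z$. Incidentally, your computation yields the factor $\overline{h(x_s)}$ inside the sum rather than the constant $h(x_r)$ printed in \eqref{eq:restric-on-h-at-discon}; this is in fact how the lemma is used in the proof of Theorem~\ref{prp:Cn-is-optimal-general-h-frame} and does not affect the ``in particular'' conclusion.
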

\begin{proof}
  The following argument is inspired by the proof of Lemma~1 in
  \cite{LaugesenGabor2009}. Fix $r \in [R]$. Uniform convergence of
  $\sum_{n \in \Z} D^m[g(\cdot+an)\overline{h(\cdot+an)}]$ for each
  $m = 0,1,\dots,n+1$ allows us to differentiate the window condition
  \eqref{eq:general-windows-central-cond} term by term \cite[Theorem~7.17]{RudinPrinciples1976}. Differentiating
  $m$ times then gives:
  \begin{equation}
  \sum_{n \in \Z_{>0}} \sum_{\ell =0}^m \binom{m}{\ell} g^{(\ell)}(x+an)\overline{h^{(m-\ell)}(x+an)} = 0,  \quad \text{for
  all } x \in \itvoo{t_r}{t_{r+1}}.
\label{eq:general-windows-term-wise-diff}
\end{equation}
for all $r =0,1,\dots , R$ and $m=1,\dots, n$.
Note that the sum \eqref{eq:general-windows-term-wise-diff} is $a$
periodic. Hence, by subtracting the two one-sided limits $x \nearrow t_r$
and $x \searrow t_r$ of \eqref{eq:general-windows-term-wise-diff}, we
obtain \eqref{eq:restric-on-h-at-discon}.
\end{proof}

Recall that if $\gabor$ and $\gabor[h]$ with $g,h \in L^2(\R)$ are dual
  frames for $L^2(\R)$, then \eqref{eq:general-windows-central-cond}
  holds. Hence, under the assumptions of
  Lemma~\ref{lem:general-obstruc-window-cond}, duality of $g$ and $h$ restricts the
  possible values of $h$ on $\{x_j\}_{j \in J}$. For windows $g$ with
  support in $\itvcc{-1}{1}$,
  Lemma~\ref{lem:general-obstruc-window-cond} leads to the following
  general obstruction result.
\begin{theorem}
\label{prp:Cn-is-optimal-general-h-frame}
  Let $b \in \itvoo{0}{1}$ and $h \in L^2(\R)$.  Let $n \in \Z_{\ge -1}$, and let $g \in
 C^n(\R)$ be a piecewise $C^{n+1}$-function with
 $\supp g \subset \itvcc{-1}{1}$, and let
 $\{x_j\}_{j\in J} \subset \itvcc{-1}{1}$ denote the finite set of points, where $g^{(n+1)}$
 is discontinuous. Assume either 
 \begin{enumerate}[(i)]
 \item $0 \in \{x_j\}_{j\in J}$ and $\supp h \subset \itvcc{-\kmax-1}{\kmax+1}$, or
 \item $0 \in \{x_j\}_{j\in J}$ and $\pm 1 \notin \{x_j\}_{j\in J}$.
 \end{enumerate}
 If $\gaborA$ and $\gaborA[h]$ are dual
  frames, then $h \notin C^{n+1}(\R)$.
\end{theorem}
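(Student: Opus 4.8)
The plan is to reduce Theorem~\ref{prp:Cn-is-optimal-general-h-frame} to Lemma~\ref{lem:general-obstruc-window-cond} by choosing $a=1$, checking the hypotheses of that lemma, and then extracting a contradiction from the specific structure $0 \in \{x_j\}_{j \in J}$ together with either the compact support of $h$ (case (i)) or the absence of discontinuities at $\pm 1$ (case (ii)). Since $\supp g \subset \itvcc{-1}{1}$, the set of discontinuity points of $g^{(n+1)}$, taken modulo $1$, is $\{t_r\}$ with each $t_r \in \itvco{-1/2}{1/2}$; the point $0$ is one of these, say $t_{r^\star}=0$. In both cases the key question is which other discontinuity points $x_s$ satisfy $x_s - x_{r^\star} \in \Z$, i.e.\ $x_s \in \Z$; the only candidates in $\itvcc{-1}{1}$ are $x_s \in \{-1,0,1\}$.

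First I would verify the uniform-convergence hypothesis of Lemma~\ref{lem:general-obstruc-window-cond}. Because $g,h \in L^2(\R)$ generate dual frames and $\supp g$ is compact (contained in $\itvcc{-1}{1}$), for any fixed compact $x$-interval the sum $\sum_{n\in\Z} D^m[g(\cdot+n)\overline{h(\cdot+n)}]$ has only finitely many nonzero terms on that interval, so uniform convergence on each $\itvcc{t_r}{t_{r+1}}$ is automatic; one just needs $h \in C^{n+1}(\R)$, which is the assumption being contradicted. With that in place, Lemma~\ref{lem:general-obstruc-window-cond} applied at the point $x_{r^\star}=0$ yields
\[
\sum_{\{s : x_s \in \Z\}} \left[ g^{(n+1)}(x_s^+) - g^{(n+1)}(x_s^-) \right] h(0) = 0.
\]
Now I handle the two cases. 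In case (ii), $\pm 1 \notin \{x_j\}_{j\in J}$, so the only $x_s \in \Z$ is $x_s = 0$ itself; the displayed equation collapses to $[g^{(n+1)}(0^+)-g^{(n+1)}(0^-)]\,h(0)=0$, and since $g^{(n+1)}$ is genuinely discontinuous at $0$ the bracket is nonzero, forcing $h(0)=0$. But then I would differentiate the window condition one more time to get information not just at $0$ but at the other points; more directly, I would argue that $h(0)=0$ already contradicts $h \in C^{n+1}(\R)$ being a dual window — here one should instead run Lemma~\ref{lem:general-obstruc-window-cond} at \emph{every} discontinuity point $x_r$ and combine the resulting linear constraints. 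The cleanest route: the values $g^{(n+1)}(x^+)-g^{(n+1)}(x^-)$ at $x=-1,0,1$ are the jumps; in case (ii) only the jump at $0$ survives, giving $h(0)=0$, and then one inspects the $m=n+1$ term-wise differentiated window condition restricted near $0$, which now reads $g^{(n+1)}(x)\overline{h(x)} + (\text{lower-order terms})=b$-derivative, and the jump in $g^{(n+1)}\overline{h}$ at $0$ is $[\text{jump}]\cdot\overline{h(0)}=0$, consistent — so the contradiction must instead come from combining $h(0)=0$ with continuity of $h^{(n+1)}$ at $0$ via the \emph{one-sided} expansions of $h_z$ as in Proposition~\ref{thm:Cn-is-optimal-comp-h}. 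In case (i), by Lemma~\ref{lem:para-all-duals} we may write $h=h_z$ (after checking the support hypotheses match via Remark~\ref{rem:difference-betwe-N-kmax}), and then the argument of Proposition~\ref{thm:Cn-is-optimal-comp-h} — comparing $h_z^{(n+1)}(0^-)= g^{(n+1)}(1^-)z(1)$ with $h_z^{(n+1)}(0^+)=-g^{(n+1)}(-1^+)z(0)$ using the $C^{n+1}$ conditions $z(0)=b/g(0)^2$, $z(1)=-b/g(0)^2$ — gives $g^{(n+1)}(1^-)=g^{(n+1)}(-1^+)$, which I would combine with the jump relation at $0$ from Lemma~\ref{lem:general-obstruc-window-cond} to force all three jumps (at $-1,0,1$) to vanish, contradicting the assumed discontinuity at $0$.

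The main obstacle, I expect, is the bookkeeping in case (i) when $\pm 1 \in \{x_j\}_{j\in J}$: then the sum in \eqref{eq:restric-on-h-at-discon} at $x_r=0$ involves the jumps at $-1$, $0$, and $1$ simultaneously, all multiplied by $h(0)$, so a single application of Lemma~\ref{lem:general-obstruc-window-cond} does not isolate the jump at $0$. One must also apply the lemma at $x_r=1$ (equivalently $x_r=-1$, since they are congruent mod $1$) to get a second equation relating the same jumps times $h(1)=h(-1)$, and then feed in the extra information $h(1)=0$ (from continuity of $h$ and $\supp h \subset \itvcc{-\kmax-1}{\kmax+1}$, as in Lemma~\ref{lem:continuity}) together with the Proposition~\ref{thm:Cn-is-optimal-comp-h}-style identity $h_z^{(n+1)}(0^-)=h_z^{(n+1)}(0^+)$ to close the system. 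Getting the signs and the orientation of the one-sided derivatives consistent across $x=-1,0,1$ is where care is needed; everything else is a direct appeal to the lemmas already proved.
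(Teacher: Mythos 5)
There is a genuine gap: you never establish that $h(0)\neq 0$, and that single fact is the engine of the paper's proof. Since $\supp g\subset\itvcc{-1}{1}$ gives $g(k)=0$ for all $k\in\Z\setminus\{0\}$, the $k=0$ duality equation \eqref{eq:char-eqns} evaluated at $x=0$ (legitimate once $h\in C^{n+1}(\R)$ is assumed for contradiction, the sum being locally a finite sum of continuous functions) yields $g(0)\overline{h(0)}=b$, hence $h(0)=b/g(0)\neq 0$. With this in hand the theorem closes in one step from the jump relation \eqref{eq:restric-on-h-at-discon} at $x_r=0$, where the relevant points are $x_s\in\{-1,0,1\}$, $h$ is evaluated at each $x_s$, and $g^{(n+1)}(-1^-)=g^{(n+1)}(1^+)=0$: in case (ii) the jumps at $\pm 1$ vanish by hypothesis, while in case (i) the terms at $\pm 1$ are killed because the support assumption forces $h(\pm 1)=0$ (via \cite[Lemma~3.2]{ChristensenGabor2010} and continuity of $h$). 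Either way one is left with $\bigl[g^{(n+1)}(0^+)-g^{(n+1)}(0^-)\bigr]h(0)=0$, contradicting $h(0)\neq 0$.

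Your treatment of case (ii) stalls exactly where this fact is needed: you correctly reduce to $\bigl[g^{(n+1)}(0^+)-g^{(n+1)}(0^-)\bigr]h(0)=0$, conclude $h(0)=0$, and then concede you cannot locate the contradiction, proposing instead to use one-sided expansions of $h_z$ as in Proposition~\ref{thm:Cn-is-optimal-comp-h}. That route is unavailable: in case (ii) $h$ carries no compact support assumption, so Lemma~\ref{lem:para-all-duals} does not give $h=h_z$; moreover the identities $h_z^{(n+1)}(0^-)=g^{(n+1)}(1^-)z(1)$ and $h_z^{(n+1)}(0^+)=-g^{(n+1)}(-1^+)z(0)$ were derived under the partition-of-unity (and real-valuedness) hypotheses of that proposition, which make $\psi^{(n+1)}$ vanish and are not assumed in the present theorem. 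Your plan for case (i) suffers from the same omission in the end: even after combining jump relations at $x_r=0$ and $x_r=1$ with Proposition-style identities to cancel the contributions at $\pm 1$, you still need $h(0)\neq 0$ to turn $\bigl[g^{(n+1)}(0^+)-g^{(n+1)}(0^-)\bigr]h(0)=0$ into a contradiction, and this is never proved; it is also a considerable detour compared with simply invoking $h(\pm 1)=0$. The parts you do verify (choice $a=1$, uniform convergence of the differentiated window condition, identification of the integer-congruent discontinuity points) are fine, but the proof as proposed does not reach a contradiction.
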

\begin{proof}
  Assume towards a contradiction that $h \in C^{n+1}(\R)$.
  From $\supp g \subset [-1,1]$, it follows that $g(k)=0$ for all
  $k \in \Z\setminus \{0\}$ and that
  $\set{x_j}_{j \in J} \subset \itvcc{-1}{1}$. Hence, equation
  \eqref{eq:char-eqns} for $k=0$ implies that
  $h(0)=b/g(0)>0$. Depending on whether we use assumption (i) or
  (ii), the
  points $x=\pm 1$ may or may not belong to $\set{x_j}_{j \in J}$. In
  either case, $g^{(n+1)}(-1^-)=g^{(n+1)}(1^+)=0$ and we have from \eqref{eq:restric-on-h-at-discon} that
\[ 
   0 =    \left[g^{(n+1)}(-1^+)-0)\right] h(-1) +   \left[
    g^{(n+1)}(0^+)-g^{(n+1)}(0^-)\right] h(0) +
  \left[0-g^{(n+1)}(1^-)\right] h(1) 
\]
If we use assumption (i), then, by the compact support of $h$, we have
from \cite[Lemma~3.2]{ChristensenGabor2010} that $h(\pm 1)=0$. On
the other hand, from assumption (ii), we have
$g^{(n+1)}(-1^+)=g^{(n+1)}(1^-)=0$. In either case, we get
  \[ 
  \left[g^{(n+1)}(0^+)-g^{(n+1)}(0^-) \right] h(0) = 0,
  \]
  which is a contradiction to $0 \in \{x_j\}_{j\in J}$ and $h(0)>0$.
\end{proof}

The conditions on the window $g$ in the above results should be understood as
follows. In order to make the statement as strong as possible, we want
generators $g$ just shy of being in the $C^{n+1}$-class. Hence, the
function $g$
is assumed to be $C^n$ everywhere and piecewise $C^{n+1}$ except at a finite
number of points, where both one-sided limits of $g^{(n+1)}$ exist, but
do not agree. The following example shows a general, but typical,
obstruction of the smoothness of dual windows.

\begin{example}
\label{ex:no-smooth-dual}
Let $b \in \itvoo{0}{1}$, $n \in \Z_{>0}$, and let $g\in C^n(\R)$ with
$\supp g \subset \itvcc{-1}{1}$ be a
$C^\infty$-function except at $x=0$, where $g^{(n+1)}$ fails to be
continuous. Suppose the Gabor system $\gaborA$ is a frame for
$L^2(\R)$. Then, by
Theorem~\ref{prp:Cn-is-optimal-general-h-frame}, it follows that
no dual windows $h \in C^{n+1}(\R)\cap L^2(\R)$ exists. Note that this
conclusion holds whether or not $h$ is assumed to have compact support.
\end{example}

\subsection{Small support}
\label{sec:duals-with-small}

While the previous section considered optimality of the smoothness of
the dual windows, we are here concerned with optimizing, that is, minimizing, the support
length. Such questions were considered in \cite{ChristensenGabor2012},
where the authors characterized the existence of continuous dual
windows with short support for continuous windows $g$ with finitely
many zeros inside their support $\itvcc{-1}{1}$. In the following
result we consider the possibility of higher
order smoothness of dual windows with short support.

\begin{theorem}
\label{thm:small-support}
Let $n \in \Z_{\ge 0}$, let $b\in\itvcos{\frac{N}{N+1}}{\frac{2N}{2N+1}}$ for some $N\in\Z_{>0}$,
and let $g \in V^n_+(\R)$.
Define
\begin{equation}\label{eq:def_z_small_supp}
z(x)=\begin{cases}
b\frac{\psi(x)}{g(x)} & x\in\itvcc{0}{1-N(\tfrac{1}{b}-1)},\\
z_{\mathit{mid}}(x) & x\in\itvoo{1-N(\tfrac{1}{b}-1)}{N(\tfrac{1}{b}-1)},\\
-b\frac{\psi(x)}{g(x-1)} & x\in\itvcc{N(\tfrac{1}{b}-1)}{1},\\
\end{cases}
\end{equation}
where $z_{\mathit{mid}}:\itvoo{1-N(\tfrac{1}{b}-1)}{N(\tfrac{1}{b}-1)}\to\C$ is a
measurable function. The following assertions hold:
\begin{enumerate}[(a)]
\item 
The dual window $h_z$ has compact support in 
  $\itvcc{-N}{N}$. \label{item:small-supp}
\item $h_z\in C^n(\R)$ if and only if $z\in C^n(\itvcc{0}{1})$. \label{item:small-smooth}
\item Suppose $g$ is even. Then $h_z$ is even if and only if $z_{\mathit{mid}}$ is antisymmetric around $x=1/2$, i.e., $z_{\mathit{mid}}(x)=-z_{\mathit{mid}}(1-x)$ for a.e. $x\in\itvoc{1-N\left(\frac{1}{b}-1\right)}{1/2}$. \label{item:small-symm}
\end{enumerate}
\end{theorem}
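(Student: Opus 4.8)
The plan is to handle the three conclusions separately, each being a short reduction to results already established; the unifying point is that the three branches of \eqref{eq:def_z_small_supp} are chosen so that the ``boundary brackets'' $g(x-1)z(x)+b\psi(x)$ (relevant for $x$ near $1$) and $-g(x+1)z(x+1)+b\psi(x+1)$ (relevant for $x$ near $0$) vanish identically on suitable subintervals of $\itvcc{0}{1}$. As a preliminary I would record the two arithmetic consequences of the hypothesis on $b$: the inequality $b\ge N/(N+1)$ is equivalent to $N(1/b-1)\le 1$, so the first and third branches of \eqref{eq:def_z_small_supp} have genuine domains; and $b<2N/(2N+1)$ is equivalent to $1-N(1/b-1)<N(1/b-1)$, so the three branches partition $\itvcc{0}{1}$ with a nonempty \emph{open} middle piece, which moreover contains $1/2$. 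I would also note at the outset that $g$ is nonzero on $\itvcc{0}{1-N(1/b-1)}\subset\itvco{0}{1}$ and on $\itvcc{N(1/b-1)-1}{0}\subset\itvoc{-1}{0}$, so the quotients in \eqref{eq:def_z_small_supp} are well defined.

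For (a), I would start from the inclusion \eqref{eq:support-of-h}. The intervals $\itvcc{k/b}{k+1}$ and $\itvcc{-k-1}{-k/b}$ with $1\le k\le N-1$ lie inside $\itvcc{-N}{N}$ automatically, so the task reduces to showing $h_z\equiv 0$ on $\itvcc{k/b}{k+1}$ and $\itvcc{-k-1}{-k/b}$ for $N\le k\le\kmax$. On $\itvcc{k/b}{k+1}$ we have $\overline{h_z(x)}=\gamma_k(x)$, and after the shift $x=y+k$ the defining product of $\gamma_k$ carries the factor $g(y-1)z(y)+b\psi(y)$ with $y$ ranging over $\itvcc{k(1/b-1)}{1}\subseteq\itvcc{N(1/b-1)}{1}$ (here $k\ge N$ is used); on that interval $z(y)=-b\psi(y)/g(y-1)$ by \eqref{eq:def_z_small_supp}, so the factor is identically zero and $\gamma_k\equiv 0$. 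The mirror argument uses $\eta_k$, whose bracket $-g(w)z(w)+b\psi(w)$ runs over $w\in\itvcc{0}{1-k(1/b-1)}\subseteq\itvcc{0}{1-N(1/b-1)}$, where $z(w)=b\psi(w)/g(w)$, hence also vanishes. This settles (a).

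For (b), I would invoke Lemma~\ref{lem:continuity} when $n=0$ and Theorem~\ref{thm:higher-order-smooth} when $n\ge 1$; in both cases $h_z\in C^n(\R)$ holds iff $z\in C^n(\itvcc{0}{1})$ \emph{and} $z$ satisfies the endpoint identities \eqref{eq:C0-cond-on-z-left}, \eqref{eq:C0-cond-on-z-right}, and (for $n\ge 1$) \eqref{eq:Cn-cond-on-z-left}, \eqref{eq:Cn-cond-on-z-right}. Thus the content of (b) is only that a $C^n$-function of the form \eqref{eq:def_z_small_supp} satisfies these identities automatically. Near $x=0$ the first branch gives $g(x)z(x)=b\psi(x)$; differentiating $m$ times with \eqref{eq:leibniz} gives $\sum_{\ell=0}^m\binom{m}{\ell}g^{(\ell)}(0)z^{(m-\ell)}(0)=b\psi^{(m)}(0)$, which is exactly \eqref{eq:Cn-cond-on-z-left} after isolating the $\ell=0$ term (and \eqref{eq:C0-cond-on-z-left} when $m=0$, since $\psi(0)=1/g(0)$). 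Near $x=1$ the third branch gives $g(x-1)z(x)=-b\psi(x)$; differentiating $m$ times and using $D^{\ell}[g(\cdot-1)](1)=g^{(\ell)}(0)$ and $\psi^{(m)}(1)=\psi^{(m)}(0)$ produces \eqref{eq:Cn-cond-on-z-right} (and \eqref{eq:C0-cond-on-z-right}). The reverse implication is immediate from the characterizations cited.

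For (c), Lemma~\ref{lem:symmetry} says that for even $g$, $h_z$ is even iff $z(x)=-z(1-x)$ for a.e.\ $x\in\itvcc{0}{1/2}$. With $g$ even the periodization $\sum_{n\in\Z}g(\cdot+n)$ is even, so $\psi$ is even and $1$-periodic; hence for $x\in\itvcc{0}{1-N(1/b-1)}$ (so $1-x\in\itvcc{N(1/b-1)}{1}$) one reads off from \eqref{eq:def_z_small_supp} that $z(1-x)=-b\,\psi(1-x)/g(-x)=-b\,\psi(x)/g(x)=-z(x)$, so the antisymmetry relation is automatic there, and likewise on $\itvcc{N(1/b-1)}{1}$. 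On the remaining range $\itvoc{1-N(1/b-1)}{1/2}$ both $x$ and $1-x$ lie in the open middle interval, so there the relation $z(x)=-z(1-x)$ is literally $z_{\mathit{mid}}(x)=-z_{\mathit{mid}}(1-x)$. Hence $z$ is antisymmetric around $1/2$ a.e.\ iff $z_{\mathit{mid}}$ is, which is (c). Across the whole proof the only genuinely fiddly part is the interval bookkeeping in (a) --- verifying $\itvcc{k(1/b-1)}{1}\subseteq\itvcc{N(1/b-1)}{1}$ and $\itvcc{0}{1-k(1/b-1)}\subseteq\itvcc{0}{1-N(1/b-1)}$ for $k\ge N$, checking that the two endpoint branches of \eqref{eq:def_z_small_supp} really overlap the pieces of $h_z$ they must annihilate, and seeing that the two inequalities on $b$ are used exactly where needed; once that is written out carefully, (b) and (c) are immediate applications of Theorem~\ref{thm:higher-order-smooth}/Lemma~\ref{lem:continuity} and Lemma~\ref{lem:symmetry}.
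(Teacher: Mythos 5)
Your proposal is correct and follows essentially the same route as the paper's own proof: part (a) by showing the brackets $g(x-1)z(x)+b\psi(x)$ and $-g(x+1)z(x+1)+b\psi(x+1)$ vanish on the relevant ranges for $k\ge N$, part (b) by differentiating the outer-branch identities $g(x)z(x)=b\psi(x)$ and $g(x-1)z(x)=-b\psi(x)$ at $x=0,1$ to recover the endpoint conditions of Lemma~\ref{lem:continuity} and Theorem~\ref{thm:higher-order-smooth}, and part (c) by checking that the outer branches are automatically antisymmetric so that Lemma~\ref{lem:symmetry} reduces everything to $z_{\mathit{mid}}$. The only caveat --- shared with the paper's own argument --- is that at the endpoint $b=N/(N+1)$ the outer branches degenerate to single points, so the differentiation step in (b) implicitly uses $b>N/(N+1)$.
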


\begin{proof}
 By definition of $N$, we have 
  $N(\tfrac{1}{b}-1)\le 1$, hence, $z$ is well-defined. 
 
 \eqref{item:small-supp}:  If $\kmax < N$, then
  $\supp{h_z}\subset \itvcc{-\kmax-1}{\kmax+1} \subset \itvcc{-N}{N} $
  by \eqref{eq:support-of-h}. Assume now that $\kmax\geq N$ and consider
  the dual windows $h_z$ on $\itvcc{N}{\kmax+1}$. For $x>N$, it
  suffices to show that  $h_z(x) = 0$ for
  $x \in \itvoo{k/b}{k+1}$ and $k= N,\dots,\kmax$. Recall
  that, for any $k=1,\dots, \kmax$, 
\[  h_z(x)=\gamma_{k}(x)\]
for $x\in\itvoo{k/b}{k+1}$, which can be rewritten as
\[  h_z(x+k)=\gamma_{k}(x+k) = (-1)^{k} \prod_{j \in [k]}
  \frac{g(x-1-j(1/b-1))}{g(x-j(1/b-1))} \left[ 
g(x-1) z(x) + b\psi(x) \right]\]
for $x\in\itvoo{k/b-k}{1}$.
Since $k\geq N$, we have the inclusion $\itvoo{k(\tfrac{1}{b}-1)}{1}\subset\itvoo{N(\tfrac{1}{b}-1)}{1}$. Hence
\[
g(x-1) z(x) + b\psi(x)=g(x-1)\left(-b\frac{\psi(x)}{g(x-1)}\right)+b\psi(x)=0.
\]
Thus $h_z(x)=0$ for $x>N$. The argument for $x<-N$ is similar so we
omit it. 

\eqref{item:small-smooth}: By inserting $x=0$ and $x=1$ into
\eqref{eq:def_z_small_supp}, it can easily be seen that $z$ satisfies
\eqref{eq:C0-cond-on-z-left} and \eqref{eq:C0-cond-on-z-right},
respectively. Therefore, the result for $n=0$ simply follows from
Lemma~\ref{lem:continuity}. For $n>0$, the ``only if''-assertion
follows directly from Theorem~\ref{thm:higher-order-smooth}. To prove
the other direction, we assume that $z\in C^n(\itvcc{0}{1})$. From
definition \eqref{eq:def_z_small_supp} we have:
\[
g(x)z(x)-b\psi(x)=0 \quad \text{for all }
x\in\itvcc{0}{1-N(\tfrac{1}{b}-1)}.
\]
By differentiating both sides $m$ times, isolating $z^{(m)}(x)$ and
inserting $x=0$, we see that $z$ satisfies
\eqref{eq:Cn-cond-on-z-left}. In a similar way, one proves that $z$ satisfies \eqref{eq:Cn-cond-on-z-right}. Hence, by Theorem~\ref{thm:higher-order-smooth}, $h_z\in C^n(\R)$.

 \eqref{item:small-symm}: From \eqref{eq:def_z_small_supp} we see that, for $x\in\itvcc{0}{1-N\left(\frac{1}{b}-1\right)}$, the function $z$ satisfies
 \[-z(1-x)=b\frac{\psi(1-x)}{g(-x)}=b\frac{\psi(x)}{g(x)}=z(x),\]
 where the second equality uses that $g$ and $\psi$ are even
 and that $\psi$ is $1$-periodic. Hence, if $g$ is even,
 then $z_{\mathit{mid}}$ is antisymmetric around $x=1/2$ if and only if $z$
 defined by \eqref{eq:def_z_small_supp} is
 antisymmetric around $x=1/2$. The conclusion now follows from Lemma~\ref{lem:symmetry}.
\end{proof}

The short support of Theorem~\ref{thm:small-support} is optimal in
the following sense of \cite{ChristensenGabor2012}: If a dual window $h$
with support $\supp{h} \subset \itvcc{-N}{N}$ exists, then necessarily
$b \le 2N/(2N+1)$, see \cite[Theorem~2.3]{ChristensenGabor2012}. If, in addition,
$h$ is assumed to be continuous, then $b<2N/(2N+1)$, see \cite[Theorem~2.5]{ChristensenGabor2012}.


\section{Examples of the construction}
\label{sec:examples}


In this section, we present two examples of the construction procedure
of dual windows using the results from the previous sections. In
Example~\ref{ex:example-hann} we construct dual windows of the
classical and widely used Hann and Blackman window, respectively. In Example~\ref{ex:example-oscill} we
consider a smoother, but non-symmetric window; the setup is more
complicated than Example~\ref{ex:example-hann} and perhaps less useful for
applications, but it serves as a proof of concept of the flexibility of
our method.

\begin{example}
\label{ex:example-hann}
The Hann window $g_{\mathit{hann}}\in C^1(\R)$ is defined by
\begin{equation*}
g_{\mathit{hann}}(x)= \cos^2(\pi x/2)\chi_{\itvcc{-1}{1}}(x) = \begin{cases}
\frac12-\frac12 \cos(\pi(x+1)) & x \in \itvco{-1}{0}\\
\frac12 + \frac12 \cos(\pi(x)) & x \in \itvcc{0}{1}\\
0 & \text{otherwise,}
\end{cases}
\end{equation*}
and the Blackman window $g_{\mathit{blac}}\in C^1(\R)$ is defined by
\[
g_{\mathit{blac}}(x) = \bigl[ 0.42 + 0.5 \cos(\pi x) + 0.08 \cos(2 \pi
  x) \bigr] \chi_{\itvcc{-1}{1}}(x) 
\]
for $x \in \R$, see Figure~\ref{fig:ex1-windows-and-FT}. Both these widows are continuously differentiable,
symmetric, non-negative, and normalized $g(0)=1$, and the
Hann window even has the partition of unity property. Both of the
windows belong to $V^1_+(\R)$, but not $V_+^2(\R)$; hence, the optimal
smoothness of compactly supported dual windows are $h \in C^1(\R)$.
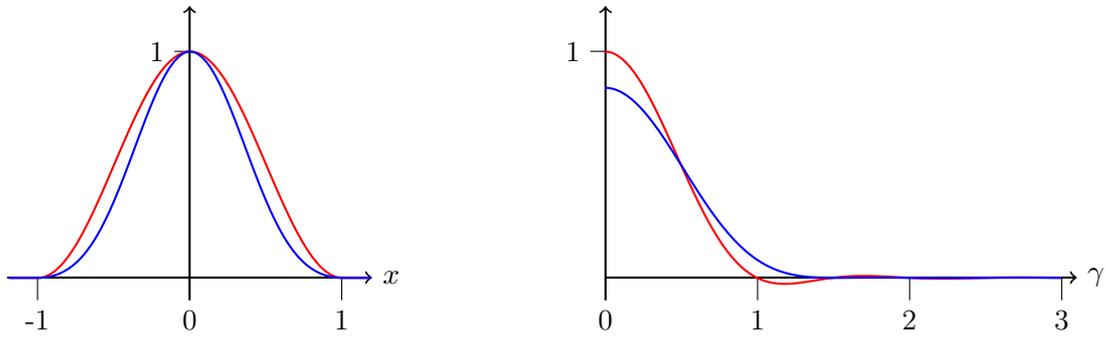
\begin{figure}[!h]
{\centering
    \begin{minipage}[t][][b]{0.4\linewidth}
      \begin{tikzpicture}[scale=2,y=1.5cm,x=1cm]
        \draw[->, thick] (-1.2,0) -- (1.2,0) node[right] {$x$};
        \draw[->, thick] (0,-0.1) -- (0,1.2); \foreach \x in {-1,0,1}
        \draw (\x,0) -- (\x,-0.1) node[below] {\x}; \foreach \y in {1}
        \draw (0,\y) -- (-0.1,\y) node[left] {\y};
        \begin{axis}[mark size=1.25pt,anchor=origin,hide
          axis,y=1.5cm,x=1cm]
          \addplot[color=red] table{data-g-hann.tsv};
          \addplot[color=blue] table{data-g-blackman.tsv};
        \end{axis}
      \end{tikzpicture}
    \end{minipage}
    \hfill
    \begin{minipage}[t][][b]{0.5\linewidth}
      \begin{tikzpicture}[scale=2,y=1.5cm,x=1cm]
        \draw[->, thick] (0,0) -- (3.1,0) node[right] {$\gamma$};
        \draw[->, thick] (0,-0.1) -- (0,1.2); \foreach \x in {0,1,2,3}
        \draw (\x,0) -- (\x,-0.1) node[below] {\x}; \foreach \y in {1}
        \draw (0,\y) -- (-0.1,\y) node[left] {\y};
        \begin{axis}[mark size=1.25pt,anchor=origin,hide
          axis,y=1.5cm,x=1cm]
          \addplot[color=red] table{data-FT-g-hann.tsv};
          \addplot[color=blue] table{data-FT-g-blackman.tsv};
        \end{axis}
      \end{tikzpicture}
    \end{minipage}
}
\caption{\emph{Left:} the Hann window $g_{\mathit{hann}}\in V^1_+(\R)$ (red) and the Blackman window $g_{\mathit{blac}}\in V^1_+(\R)$
  (blue). \emph{Right:} The Fourier transform of the Hann window
  $\widehat{g}_{\mathit{hann}}$ (red) and of
  the Blackman window $\widehat{g}_{\mathit{blac}}$ (blue). Both windows and
  their Fourier transforms are real and symmetric.}
\label{fig:ex1-windows-and-FT}
\end{figure}

 As an
example, let us consider the modulation parameter $b=3/5$. By
definition of $\kmax$ we get $\kmax=1$. Thus, the dual windows $h_z$
defined in (\ref{eq:def-h}) will have support in $\itvcc{-2}{2}$.

Since $g$ is a trigonometric polynomial on $\itvcc{-1}{1}$, it is
natural to take $z$ to be a trigonometric polynomial as well. For the
Hann window the standard choice~\eqref{eq:z-standard} is: 
\begin{equation}
z_{\mathit{hann}}(x)= b\cos(\pi x) \qquad \text{for } x \in \itvcc{0}{1},
\label{eq:hann-ex-z-def}
\end{equation}
while \eqref{eq:z-standard} for the Blackman window becomes:
\begin{equation}
z_{\mathit{blac}}(x)= b \bigl[-0.16+ 0.5 \cos(\pi x) + 0.08 \cos(2 \pi
  x) \bigr] \qquad \text{for } x \in \itvcc{0}{1}.
\label{eq:blackman-ex-z-def}
\end{equation}

Figure \ref{fig:g-ex1} shows dual windows of the Hann window
$h_{\mathit{hann}}$ and of the  Blackman  window $h_{\mathit{blac}}$ defined 
using $z$ from \eqref{eq:hann-ex-z-def} and
\eqref{eq:blackman-ex-z-def}, respectively. While $z_{\mathit{hann}}$ is
anti-symmetric around $x=1/2$, this is not the case for the chosen $z_{\mathit{blac}}$;
see Lemma~\ref{lem:symmetry} and Remark~\ref{rem:cor-on-smoothness}(c).
\begin{figure}
  \centering
  \begin{tikzpicture}[scale=2,y=1.5cm,x=1cm]
    \draw[->, thick] (-3.1,0) -- (3.1,0) node[right] {$x$}; \draw[->,
    thick] (0,-0.1) -- (0,1.2); \foreach \x in {-3,-2,-5/3,-1,0,1,5/3,2,3}
    \draw (\x,0) -- (\x,-0.1) node[below] {\x}; \foreach \y in {1}
    \draw (0,\y) -- (-0.1,\y) node[left] {\y}; 
    \begin{axis}[mark size=1.25pt,anchor=origin,hide
      axis,y=1.5cm,x=1cm]
\addplot[color=red] table{data-h-hann.tsv};
\addplot[color=blue] table{data-h-blackman.tsv};
    \end{axis}
  \end{tikzpicture}
\caption{Dual windows $h_{\mathit{hann}}$ (red) and $h_{\mathit{blac}}$ (blue) of the Hann and
Blackman window based on $z_{\mathit{hann}}$ and $z_{\mathit{blac}}$ defined in
\eqref{eq:hann-ex-z-def} and \eqref{eq:blackman-ex-z-def}, respectively. Both windows are in $C^1(\R)$ and with support $\supp h =
  \itvcc{-2}{-5/3}\cup \itvcc{-1}{1} \cup \itvcc{5/3}{2}$. }
\label{fig:g-ex1}
\end{figure}
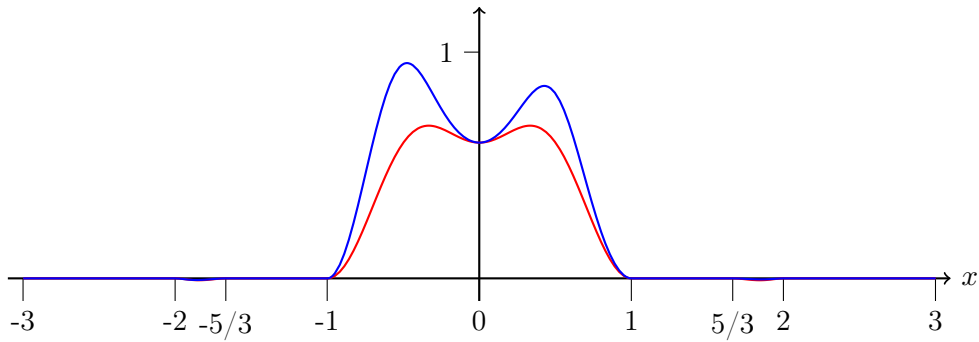

We can actually decrease the support size of the dual windows without
sacrificing the $C^1$-smoothness. By applying
Theorem~\ref{thm:small-support} with $N=1$ and taking $z_{\mathit{mid}}$ to be
the unique third degree trigonometric polynomial
$z_{\mathit{mid}}(x)=c_1 \cos(\pi x)+c_3 \cos(3 \pi x)$ so that $z \in C^1(\R)$, we
obtain dual windows in $C^1(\R)$ with support in $\itvcc{-1}{1}$. It
turns out that the support of the dual windows even shrink to $\itvcc{-2/3}{2/3}$ for
this specific setup. Since the two constructed functions $z_{\mathit{mid}}$ are
anti-symmetric around $x=1/2$, it follows by Theorem~\ref{thm:small-support}\eqref{item:small-symm}
that both these dual windows will be symmetric.  The short-support
dual windows of the Hann and Blackman window and their Fourier transform are shown in
Figure~\ref{fig:ex1-short-dual-windows-and-FT}.

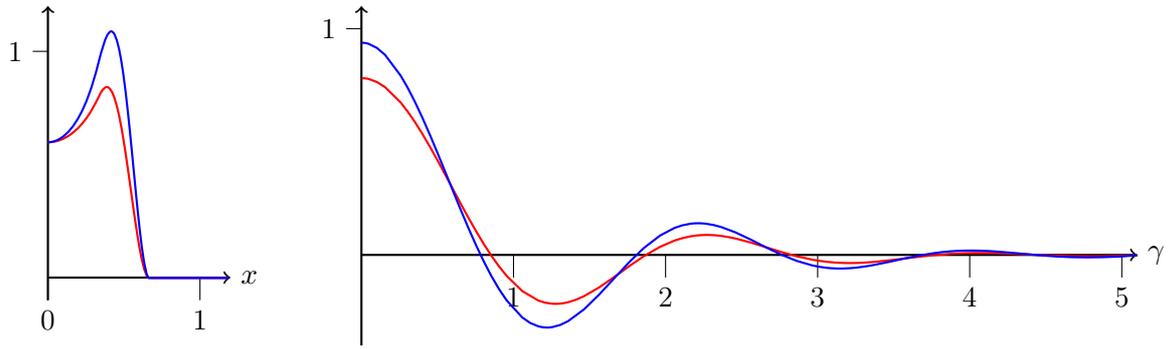
\begin{figure}[!htb]
{\centering
    \begin{minipage}[t][][b]{0.2\linewidth}
  \begin{tikzpicture}[scale=2,y=1.5cm,x=1cm]
    \draw[->, thick] (0,0) -- (1.2,0) node[right] {$x$}; \draw[->,
    thick] (0,-0.1) -- (0,1.2); \foreach \x in {0,1}
    \draw (\x,0) -- (\x,-0.1) node[below] {\x}; \foreach \y in {1}
    \draw (0,\y) -- (-0.1,\y) node[left] {\y}; 
    \begin{axis}[mark size=1.25pt,anchor=origin,hide
      axis,y=1.5cm,x=1cm]
\addplot[color=red] table{data-h-short-hann.tsv};
\addplot[color=blue] table{data-h-short-blackman.tsv};
    \end{axis}
  \end{tikzpicture}
    \end{minipage}
    \hspace{1em} 
    \begin{minipage}[t][][b]{0.75\linewidth}
      \begin{tikzpicture}[scale=2,y=1.5cm,x=1cm]
        \draw[->, thick] (0,0) -- (5.1,0) node[right] {$\gamma$};
        \draw[->, thick] (0,-0.4) -- (0,1.1); \foreach \x in {1,2,3,4,5}
        \draw (\x,0) -- (\x,-0.1) node[below] {\x}; \foreach \y in {1}
        \draw (0,\y) -- (-0.1,\y) node[left] {\y};
        \begin{axis}[mark size=1.25pt,anchor=origin,hide
          axis,y=1.5cm,x=1cm]
          \addplot[color=red] table{data-FT-h-short-hann.tsv};
          \addplot[color=blue] table{data-FT-h-short-blackman.tsv};
        \end{axis}
      \end{tikzpicture}
    \end{minipage}
}
\caption{\emph{Left:} Dual windows in $C^1(\R)$ with short support on $\itvcc{-2/3}{2/3}$. The dual
  of the Hann window is shown in red, while the dual of the Blackman
  window is shown in blue. \emph{Right:} The Fourier transform the two dual
  windows shown left: the Fourier transform of the dual Hann window
  (red) and the Fourier transform of
  the dual Blackman window (blue). Both dual windows and
  their Fourier transforms are real and symmetric.}
\label{fig:ex1-short-dual-windows-and-FT}
\end{figure}
\end{example}

The next example illustrates the construction of dual windows when $g$
does not have zero derivatives at the origin and the redundancy $(ab)^{-1}=3\pi/7$ is irrational. 
\begin{example}
\label{ex:example-oscill}
 We take $\beta$ to be a spline defined as:
\[ 
   \beta(x) =
   \begin{cases}
     p(x)  & x \in \itvcc{-1}{-4/5}, \\
     1    & x \in \itvcc{-4/5}{4/5}, \\
     p(-x) & x \in \itvcc{4/5}{1}, \\ 
     0 & \text{otherwise},
\end{cases}
\]
where $p(x)=10625- 60000x+ 135000x^2-  151250x^3 + 84375 x^4 -
18750x^5$ is the five-degree polynomial satisfying
$p(1)=p'(1)=p''(1)=p'(4/5)=p''(4/5)=0$ and $p(4/5)=1$. Then $\beta \in C^2(\R)$ is a bump function supported on
$\itvcc{-1}{1}$. We consider the window $g \in V^2_+(\R)$ defined by
\begin{equation*}
g(x)= \frac{1}{16} \bigl(2-(x-5)(x+3)\bigr) \beta(x).
\end{equation*}
As an example of an irrational modulation parameter, let us consider
$b=\frac{7}{3\pi}$. Then $\kmax=2$ so the support of $h_z$ is:
\begin{equation*}
  \supp{h_z} = \itvcc{-3}{-6\pi/7} \cup \itvcc{-2}{-3\pi/7} \cup
  \itvcc{-1}{1} \cup \itvcc{3\pi/7}{2} \cup \itvcc{6\pi/7}{3} \subset \itvcc{-3}{3}.
\end{equation*}

We chose $z$ to be the unique polynomial
of degree five that satisfies the six conditions of Theorem~\ref{thm:higher-order-smooth} for $n=2$; these conditions are
explicitly computed in Example~\ref{ex:comp-higher-order-cond}.
It follows that $h_z \in C^2(\R)$. The graphs of $g$ and the dual window
$h_z$ are shown in Figure~\ref{fig:g-ex2}.

  \begin{figure}
  \centering
  \begin{tikzpicture}[scale=2,y=1cm,x=1cm]
    \draw[->, thick] (-3.2,0) -- (3.2,0) node[right] {$x$}; \draw[->,
    thick] (0,-0.6) -- (0,1.4); \foreach \x in {-3,-2,-1,1,2,3}
    \draw (\x,0) -- (\x,-0.1) node[below] {\x}; \foreach \y in {-0.5,0.5,1}
    \draw (0,\y) -- (-0.1,\y) node[left] {\y}; 
    \begin{axis}[mark size=1.25pt,anchor=origin,hide
      axis,y=1cm,x=1cm]
\addplot[color=red] table{data-ex2-g-500.tsv};
\addplot[color=blue] table{data-ex2-h-500.tsv};
    \end{axis}
  \end{tikzpicture}
\caption{The window function $g \in V_+^2(\R)$ (red) and a dual window $h_z$ (blue)
  for $b=\frac{7}{3\pi}$. Both $g$ and $h$ are $C^2$-functions, and $h$
 has support in $[-3,3]$.}
\label{fig:g-ex2}
\end{figure}
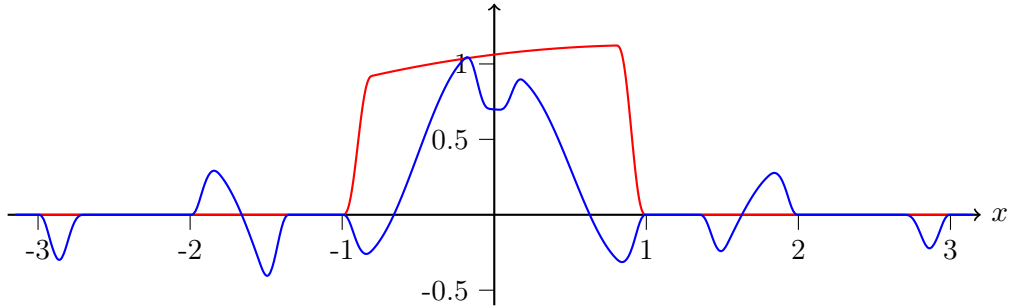
\end{example}


\end{document}